\newtheorem{thm}{Theorem}[section]
\newtheorem{prop}[thm]{Proposition}
\newtheorem{lem}[thm]{Lemma}
\newtheorem{rmk}[thm]{Remark}
\numberwithin{equation}{section}
\newcommand{\z}{{\mathbb{Z}}}
\newcommand{\nequiv}{\not\equiv}
\newcommand{\rank}{\operatorname{rank}}
\newcommand{\floor}[1]{\left\lfloor #1 \right\rfloor}
\newcommand{\ceil}[1]{\left\lceil #1 \right\rceil}
\title[The rank of universal $m$-gonal forms]{The rank of universal $m$-gonal forms}
\author{Byeong Moon Kim and Dayoon Park}
\address{Department of Mathematics, Kangnung National University, Kangnung, 210-702, Korea}
\email{kbm@kangnung.ac.kr}
\thanks{}
\address{Department of Mathematics, The University of Hong Kong, Hong Kong}
\email{pdy1016@hku.hk}
\thanks{}
\begin{document}
\maketitle

\begin{abstract}

In this article, we consider the rank of universal $m$-gonal forms for all sufficiently large $m$.
Especially, we determine the minimal rank of universal $m$-gonal form and the maximal rank of kinds of proper universal $m$-gonal form.
\end{abstract}

\section{introduction}
For $m \ge 3$, the {\it $m$-gonal number} is described as the total number of dots to constitute a regular $m$-gon.
One may easily induce a formula
\begin{equation} \label{m number}
P_m(x)=\frac{m-2}{2}x^2-\frac{m-4}{2}x
\end{equation}
for the total number of dots to constitute a regular $m$-gon with $x$ dots for each side.
We especially call the $m$-gonal number of (\ref{m number}) as {\it x-th $m$-gonal number}.
For a long time, the representation of positive integers by a sum of $m$-gonal numbers has been one of popular subjects in the field of number theory.
In 17-th Century, Fermat conjectured that every positive integer may be written as at most $m$ $m$-gonal numbers.
And Lagrange and Gauss resolved his conjecture for $m=4$ and $m-3$ in 1770 and 1796, respectively. 
Finally Cauchy presented a proof for his conjecture for all $m \ge 3$ in 1813.

By definition of $m$-gonal number, only positive integer would be admitted to $x$ in (\ref{m number}). 
On the other hand, by considering $P_m(x)$ in (\ref{m number}) with $x \in \z_{\le 0}$ as $m$-gonal number too, we may generalize the $m$-gonal number.
As a general version of Fermat's Conjecture, very recently, the second author \cite{det} completed the minimal $\ell_m$ for which every positive integer may be written as $\ell_m$ (generalized) $m$-gonal numbers for all $m \ge 3$ as
$$ \ell_m=
\begin{cases} m-4 & \text{ if } m \ge 9\\
3& \text{ if } m \in \{3,5,6\}\\
4& \text{ if } m \in \{4,7,8\}. \\
\end{cases}$$

To consider the representation of positive integers by $m$-gonal numbers more generally, we may think about the weighted sum of $m$-gonal numbers
\begin{equation}\label{m form}
F_m(\mathbf x)=a_1P_m(x_1)+\cdots+a_nP_m(x_n)
\end{equation}
where $a_i \in \mathbb N$ admitting $x_i \in \z$.
We call $F_m(\mathbf x)$ in (\ref{m form}) as {\it $m$-gonal form}.
In this paper, without loss of generality, we assume that $a_1 \le \cdots \le a_n$.
If the diophantine equation $$F_m(\mathbf x)=N$$ has an integer solution $\mathbf x \in \z^n$ for $N \in \mathbb N$, then we say that the $m$-gonal form $F_m(\mathbf x)$ {\it represents $N$}.
Naturally, an $m$-gonal form which represents every positive integer may be paid attention.
We call an $m$-gonal form which represents every positive integer as {\it universal}.
Every universal $3$-gonal forms was classified by Liouville in $19$-th Century.
And Ramanujam showd all universal $4$-gonal forms, in fact, one of them is not actually universal. 

In general, to determine whether a form is universal is not easy.
On the other hand, Conway and Schneeberger announced amazingly simple criterion to determine universality of a quadratic form.
The result which is well known as {\it $15$-Theorem} states that the representability of positive integers up to only $15$ by a quadratic form characterize the universality of the quadratic form.
Kane and Liu \cite{BJ} claimed that such a finiteness theorem holds for any $m$-gonal form too.
On the other words, there is (unique and minimal) $\gamma_m$ for which if an $m$-gonal form represents every positive integer up to $\gamma_m$, then the $m$-gonal form is universal.
They \cite{BJ} also questioned about the growth of $\gamma_m$ (which is asymptotically increasing) and showed that
$$m-4 \le \gamma_m \ll m^{7+\epsilon}.$$
The authors \cite{KP'} obtained the optimal growth of $\gamma_m$ which is exactly linear on $m$ by showing Theorem \ref{C}.
\begin{thm} \label{C}
For $m \ge 3$, there exists an absolute constant $C$ such that $\gamma_m \le C(m-2)$.
\end{thm}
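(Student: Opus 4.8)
My plan is to prove Theorem \ref{C} by the escalation method: I would reduce the universality of an $m$-gonal form to a representation problem for a diagonal quadratic form restricted to a fixed coset, set up an escalation tree whose truants control $\gamma_m$, and then bound the large truants by a local-global argument.

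First I would complete the square. Multiplying (\ref{m form}) by $8(m-2)$ and using the identity $8(m-2)P_m(x)=(2(m-2)x-(m-4))^2-(m-4)^2$, one sees that $F_m(\mathbf x)=N$ has a solution $\mathbf x\in\z^n$ if and only if the diagonal quadratic form $Q(\mathbf y)=\sum_{i=1}^{n}a_iy_i^2$ represents $8(m-2)N+(m-4)^2\sum_{i=1}^{n}a_i$ with every $y_i$ lying in the fixed class $y_i\equiv -(m-4)\pmod{2(m-2)}$. This turns the problem into the representability of a single arithmetic progression of integers by a quadratic form restricted to a coset of $(2(m-2)\z)^n$, a setting where the cited local tools (the Hensel Lemma, the Local Square Theorem, and James' Theorem) apply.

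Next I would organize the escalation. Since the smallest positive $m$-gonal number is $P_m(1)=1$ and the next one is $P_m(-1)=m-3$, a universal form must have $a_1=1$; and if a partial form represents every integer in $[1,t-1]$ but not the truant $t$, then to represent $t$ the next coefficient must satisfy $a_{k+1}\le t$. Because $\gamma_m$ equals the largest truant occurring along a non-universal escalation, it suffices to show that \emph{any $m$-gonal form representing every integer in $[1,C(m-2)]$ is universal.} The combinatorial heart here is that for $2\le N\le m-4$ the only usable summand is $1=P_m(1)$, so the coefficients $a_i$ must already realize every value in $[1,m-4]$ as a partial sum; I would exploit this, together with the escalation inequality above, to guarantee that a form surviving up to the bound $C(m-2)$ contains a subform on a few variables, all of whose coefficients are $\le C(m-2)$, which is locally universal at every prime. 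The only delicate primes are $2$ and those dividing $m-2$, and these I would handle by the Local Square Theorem and the Hensel Lemma applied on the coset.

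Finally, for integers $N>C(m-2)$ I would invoke James' Theorem (or the corresponding effective local-global statement for quaternary and quinary forms) on that locally universal subform to conclude that every sufficiently large admissible integer is globally represented. The crucial point, and what I expect to be the main obstacle, is making this ``sufficiently large'' threshold genuinely linear in $m$ rather than polynomial: because the target on the quadratic side is $8(m-2)N+(m-4)^2\sum_i a_i$ and the congruence modulus is $2(m-2)$, the Eisenstein (local density) main term must be shown to dominate the cusp-form contribution for all $N\gtrsim m-2$ uniformly as $m\to\infty$, which forces one to track the precise dependence of every implied constant on $m$ and on the coefficients $a_i\le C(m-2)$, exactly where the earlier bound $\gamma_m\ll m^{7+\epsilon}$ was lossy. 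Combining the small truants coming from the escalation in $[1,C(m-2)]$ with this linear large-$N$ threshold yields $\gamma_m\le C(m-2)$.
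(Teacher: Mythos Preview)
The paper does not prove Theorem~\ref{C} at all: its entire ``proof'' is the sentence \emph{See \cite{KP'}}, deferring the result to the authors' separate preprint. So there is no in-paper argument to compare your proposal against; the theorem is imported as a black box and then used throughout Sections~3 and~4.

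Your outline is a reasonable sketch of the strategy one expects in \cite{KP'}: reduce via $8(m-2)P_m(x)=(2(m-2)x-(m-4))^2-(m-4)^2$ to a coset-restricted diagonal quadratic form, run an escalation to force a locally universal subform of bounded rank with coefficients $O(m)$, and then apply an effective local--global theorem. You correctly identify the crux: the quadratic target is $8(m-2)N+(m-4)^2\sum a_i$, the modulus is $2(m-2)$, and the discriminant of the relevant subform is polynomial in $m$, so one must show that the Eisenstein main term dominates the cuspidal error for $N$ merely linear in $m$, uniformly in $m$. That is exactly where Kane--Liu's $m^{7+\epsilon}$ comes from, and your proposal does not actually carry out this step---you only name it as ``the main obstacle.'' So what you have written is a plan, not a proof; the genuine content of \cite{KP'} lies precisely in the uniform bounds you leave unproved. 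In the context of the present paper, however, nothing more is required than citing the result, which is what the authors do.
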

\begin{proof}
See \cite{KP'}.
\end{proof}

On the other hand, Bhargava \cite{CS} suggested a simple proof for $15$-Theorem by introducing {\it escalator tree} of quadratic form.
Following the Bhargava's escalating method, we may consider an escalator tree of $m$-gonal form.
For a non universal form, we call the minimal integer which is not represented by the form as the {\it truant} of the form.
We call a super form of a non-universal form which represents the truant of the non-universal form as {\it escalator} of the form.
The escalator tree is a rooted tree consisting of $m$-gonal forms having the root $\emptyset$.
If a node $\sum_{i=1}^ka_iP_m(x_i)$ of the tree is not universal, then the node spreads branches by taking its children all of its escalotors $\sum_{i=1}^{k+1}a_iP_m(x_i)$ with $a_k \le a_{k+1}$.
And if a node does not have truant (i.e., the node is universal), then the node would be a leaf of the tree. 
In other words, an universal $m$-gonal form would only appear on the leaves of the escalating tree, all leaves of the tree are universal and all of the universal $m$-gonal forms on the leaves of the tree are kind of proper universal $m$-gonal forms in the sense that without its last component the universality is broken (i.e., its parents are not universal).
The $\gamma_m$ would be exactly the maximal truant of node of escalator tree of $m$-gonal form.
In effect, in \cite{CS}, Bhargava found the maximal truant of node of escalator tree of quadratic form.
Meanwhile, one may catch that an $m$-gonal form would contain at least one of leaves of escalator tree as its subform.
In this paper, we treat the escalator tree of $m$-gonal form for sufficiently large $m$.
Especially, we determine the minimal rank $r_m$ and maximal rank $R_m$ of leaf of the escalator tree and show there is a leaf of rank $n$ for any $n \in [r_m,R_m]$ for all sufficiently large $m$.
The $r_m$ would be indeed the minimal rank of universal $m$-gonal form.
So we may also obtain the minimal rank of universal $m$-gonal form for sufficiently large $m$.
And any $m$-gonal form would contain a universal $m$-gonal form of rank less than or equal to $R_m$.
Overall, the results would provide the answers of most part on the rank of universal $m$-gonal forms.
In this paper, we basically use the arithmetic theory of quadratic form.
Any unexplained notation and terminology can be found in \cite{O1} and \cite{O}.

\vskip 0.5em

The paper is organized as follows.
In section 2, we introduce escalator tree more concretely and our results.
In Section 3, we determine $r_m$ and see a leaf of rank $r_m$.
In Section 4, we determine $R_m$ and see a leaf of rank $R_m$.
\section{preliminary}

Following the Guy's argument \cite{G}, since the smallest (generalized) $m$-gonal number is $m-3$ except $0$ and $1$, we may yield that every node $\sum _{i=1}^ka_iP_m(x_i)$ of the escalator tree must satisfy the following conditions
\begin{equation}\label{coe}\begin{cases}a_1=1 &  \\ a_{i+1}\le a_1+\cdots +a_i+1 & \text{if } a_1+\cdots +a_i <m-4\end{cases}\end{equation}
because the truant of a node $\sum_{i=1}^ka_iP_m(x_i)$ with $a_1+\cdots +a_k <m-4$ would be $$a_1+\cdots + a_k +1.$$
From (\ref{coe}), we may have that 
\begin{equation}\label{2^i}a_{i+1} \le 2^i \quad \text{ when }a_1+\cdots +a_i <m-4.\end{equation}
By the Guy's argument \cite{G} again, every leaf $\sum _{i=1}^n a_iP_m(x_i)$ which is universal must have 
\begin{equation}\label{uni}a_1+\cdots +a_n \ge m-4\end{equation}
since otherwise the integers from $a_1+\cdots+a_n+1$ to $m-4$ cannot be represented by the (universal) leaf, which is a contradiction.
From (\ref{2^i}) and (\ref{uni}), we may obtain that every leaf in the escalator tree would have the rank greater than or equal to $\ceil{\log_2(m-3)}$, i.e., $$\ceil{\log_2(m-3)} \le r_m.$$
And following the Theorem \ref{C}, a truant could not exceed $C(m-2)$.
So we may clearly obtain that $$R_m \le C(m-2).$$
Throughtout this paper, we exactly determine the $r_m$ and $R_m$ for all $m$ sufficiently large.
In Chapter 3, we prove the following theorem. 
\begin{thm} \label{min}For $m>2\left(\left(2C+\frac{1}{4}\right)^{\frac{1}{4}}+\sqrt{2}\right)^2$,
\begin{equation}\label{thmrm}r_m=\begin{cases}\ceil{\log_2(m-3)}+1& \text{ when } -3 \le 2^{\ceil{\log_2(m-3)}}-m\le 1\\
\ceil{\log_2(m-3)} & \text{ when } \ \quad 2 \le 2^{\ceil{\log_2(m-3)}}-m.
\end{cases}\end{equation}
\end{thm}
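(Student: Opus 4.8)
The plan is to exploit the sharp dichotomy in the size of the generalized $m$-gonal numbers: on small inputs an $m$-gonal form behaves like a subset-sum problem, while on large inputs it is governed by the arithmetic of an attached diagonal quadratic form. Throughout write $k:=\ceil{\log_2(m-3)}$ and $S_i:=a_1+\cdots+a_i$. The first thing I would record is the small-integer reduction. Since the generalized $m$-gonal numbers are $0,1,m-3,m,3m-8,\dots$, every summand $a_iP_m(x_i)$ contributes either $0$, $a_i$, or something $\ge m-3$; hence for $1\le N<m-3$ a representation $F_m(\mathbf x)=N$ is exactly a subset sum $N=\sum_{i\in I}a_i$. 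A universal leaf must therefore realize all of $[1,m-4]$ as subset sums, which (as in the preliminaries) forces $a_1=1$, $a_{i+1}\le S_i+1$, and $S_n\ge m-4$, and the recursion $S_{i+1}\le 2S_i+1$ gives $S_i\le 2^i-1$ and reproduces $r_m\ge k$. Writing $a_{i+1}=S_i+1-\delta_i$ with $\delta_i\ge0$, the same recursion yields the exact identity $S_k=2^k-1-\sum_{i=1}^{k-1}\delta_i2^{\,k-1-i}$, which I will use as the organizing device for both bounds.

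For the lower bound in the first branch of \eqref{thmrm} (so $-3\le 2^k-m\le 1$, i.e.\ $m\ge 2^k-1$), suppose for contradiction that a universal form of rank $k$ exists. Then $S_k\ge m-4\ge 2^k-5$, so by the displayed identity $\sum_i\delta_i2^{\,k-1-i}\le 2^k-m+3\le 4$; since the weights $2^{\,k-1-i}$ double as $i$ decreases, every $\delta_i$ with $i\le k-4$ vanishes and only $\delta_{k-3},\delta_{k-2},\delta_{k-1}$ may be nonzero and small. Thus the coefficient vector is the doubling vector $1,2,\dots,2^{k-1}$ up to a bounded perturbation of its last three entries, leaving only finitely many admissible vectors for each residue class of $m$. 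For each of them I would produce an omitted integer just above $S_k$: because the coefficients are near powers of $2$, the integers reachable with a single large summand $a_iP_m(x_i)$ (with $P_m(x_i)\in\{m-3,m\}$) plus a subset sum of the remaining coefficients avoid certain residues modulo small powers of $2$, while the window $(S_k,2(m-3))$ has width $\approx 2^k$; matching this $2$-adic/parity restriction against the requirement that every integer just above $S_k$ be represented pins down, for each admissible perturbation, a specific unrepresented integer of size $O(m)$. This finite but delicate case bookkeeping is the main obstacle, and it is precisely what makes $r_m$ sensitive to the fine position of $m$ relative to $2^k$.

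For the upper bounds I would exhibit an explicit universal leaf of the asserted rank: in the first branch a rank-$(k+1)$ form (the doubling vector augmented by one extra coefficient chosen to destroy the parity obstruction just isolated), and in the second branch, where the slack $2^k-m\ge2$ lets $S_k$ range over an interval of length $\ge5$, a rank-$k$ form with one coefficient tuned so that no gap survives above $S_k$. Universality of a candidate would be verified in three ranges. For $N\le m-4$ it is immediate from the completeness condition $a_1=1$, $a_{i+1}\le S_i+1$. For all sufficiently large $N$ I would pass to the quadratic form $Q(\mathbf y)=\sum a_iy_i^2$ through $8(m-2)N+(m-4)^2S=\sum a_iy_i^2$ with $y_i\equiv-(m-4)\ (\mathrm{mod}\ 2(m-2))$, check that the target $M=8(m-2)N+(m-4)^2S$ is everywhere locally represented in the prescribed coset using the Hensel and Local Square Lemmas for the $2$-adic and $p$-adic conditions, and invoke the local-global theory for positive definite forms in $\ge 5$ variables (James' Theorem), which applies since $k\to\infty$ with $m$. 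For the remaining finite range, Theorem \ref{C} bounds the truant by $C(m-2)$ and thereby reduces universality to a finite, structured check.

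Finally, I expect the same completeness-plus-arithmetic template to yield, once a universal leaf of rank $r_m$ is built, universal leaves of each intermediate rank, so the constructions assembled for Theorem \ref{min} do double duty toward the later rank results. The conceptual crux remains the second paragraph: proving that \emph{every} rank-$k$ form in the near-doubling family of the first branch must omit an integer, since the upper-bound constructions only need one good form whereas the lower bound must defeat all of them.
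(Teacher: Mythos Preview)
Your lower-bound organization via $S_k=2^k-1-\sum_{i=1}^{k-1}\delta_i 2^{\,k-1-i}$ is sound and leads to exactly the finite list of near-doubling candidates the paper examines in Remark~\ref{rmk(rm)1}. The paper, however, does not argue by a general $2$-adic or parity mechanism: it simply writes down, for each of the at most seven surviving rank-$k$ vectors in each subcase $-3\le 2^k-m\le 1$, a specific omitted integer (one of $m-2$, $2m-4$, $2m-3$, or $5(m-2)-1$). Your sketch leaves this step at the level of ``matching restrictions,'' which is the place where the actual work lies; a uniform $2$-adic argument would be pleasant but you have not supplied one, and the omitted values the paper finds are not all the same type of obstruction.

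The real gap is in your upper bound. Invoking James' Theorem for the attached quadratic form $\sum a_i y_i^2$ does not give what you need, because the form itself changes with $m$: its rank is $k\sim\log_2 m$ and its discriminant is of size $2^{k(k-1)/2}$, so any ineffective ``sufficiently large $N$'' threshold from local--global theory is not controlled by $C(m-2)$, and Theorem~\ref{C} does not fill the gap (it only says that the truant, if any, is $\le C(m-2)$; it does not help you \emph{represent} integers in $(m-4,C(m-2)]$). The content of the paper's Lemma~\ref{rm1} is precisely an explicit verification on this range, and it proceeds by a different route: write $P_m(x)=(m-2)P_3(x-1)+x$, so that representing $A(m-2)+B$ reduces to solving $P_3(x_1-1)+2P_3(x_2-1)+4P_3(x_3-1)+8P_3(x_4-1)=A$ with a congruence $x_1+2x_2+4x_3+8x_4\equiv B\pmod{16}$ and a size control, then absorb the residual multiple of $16$ using $16P_m(x_5)+\cdots+2^{n-1}P_m(x_n)$ with $x_i\in\{0,1\}$. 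The universal ternary triangular form $P_3(x)+2P_3(y)+4P_3(z)$ supplies the needed solutions for large $A$, and a finite computer check handles $A\le 160$. None of this appears in your plan, and without it the upper-bound direction is unproved.
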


\vskip 0.5em

\begin{rmk}
Furthermore, in the proof of Therem \ref{min}, we claim that for $m>2\left(\left(2C+\frac{1}{4}\right)^{\frac{1}{4}}+\sqrt{2}\right)^2$, 
$$P_m(x_1)+2P_m(x_2)+\cdots +2^{r_m-1}P_m(x_{r_m})$$ is universal $m$-gonal form of the minimal $\rank r_m$.
Actually, the authors guess that (\ref{thmrm}) in Theorem \ref{min} holds for much smaller $m$'s too.
But we would not remove the restriction $m>2\left(\left(2C+\frac{1}{4}\right)^{\frac{1}{4}}+\sqrt{2}\right)^2$ in Theorem \ref{min} with the arguments in Chapter 3 even though one could take slightly smller lower bound instead of $2\left(\left(2C+\frac{1}{4}\right)^{\frac{1}{4}}+\sqrt{2}\right)^2$ through more careful care.
\end{rmk}

\vskip 0.5em

In Chapter 4, we determine $R_m$ by claming following theorem.

\begin{thm}\label{main}
For $m>6C^2(C+1)$,
$$R_m=\begin{cases}m-2 & \text{ when } m\nequiv 2 \pmod{3}\\m-3 & \text{ when } m\equiv 2 \pmod{3}.\end{cases}$$
\end{thm}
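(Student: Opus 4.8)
The plan is to prove the two inequalities $R_m\le m-2$ (resp. $m-3$) and $R_m\ge m-2$ (resp. $m-3$) separately, the first by bounding the depth of any branch of the escalator tree and the second by exhibiting an explicit leaf of the required rank. Throughout I would use the identity $P_m(x)=(m-2)\frac{x(x-1)}{2}+x$, which shows that a value $\sum_i a_iP_m(x_i)=(m-2)B+L$ splits into a \emph{band index} $B=\sum_i a_i\frac{x_i(x_i-1)}{2}\ge 0$ and a linear part $L=\sum_i a_ix_i$. Since the nonzero $m$-gonal numbers jump from $1$ to $m-3$, the integers represented with $B=0$ are exactly the subset sums of $\{a_1,\dots,a_n\}$, those with $B=1$ lie in $(m-2)+\{-1,2\}+\{\text{subset sums of the remaining coefficients}\}$, and so on. Universality is therefore equivalent to the consecutive bands overlapping, and by Theorem~\ref{C} it suffices to verify this for $N\le\gamma_m\le C(m-2)$, i.e. only for the first $O(C)$ bands.

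For the upper bound I would first record the reduction coming from \cite{det}: since $\ell_m=m-4$, the all-ones form $P_m(x_1)+\cdots+P_m(x_{m-4})$ is already universal, so any leaf of rank exceeding $m-4$ has at most $m-5$ unit coefficients. Writing the coefficients as $a_1=\cdots=a_t=1$ with $t\le m-5$ followed by $a_{t+1},\dots,a_n\ge 2$, the escalation condition (\ref{coe}) forces the partial sums to cover $[0,s_i]$ without gaps while $s_i<m-4$, so band $0$ covers an initial segment of length $\ge m-4$ and bands $0,1$ together already cover everything up to roughly $2(m-2)$. The heart of the argument is then a counting lemma: each additional term appended after $\sum a_i$ first reaches $m-4$ widens the overlapping bands, and once the rank reaches $m-1$ the bands are forced to overlap for every $B$ up to $\gamma_m/(m-2)$, so the form is universal and no leaf survives to rank $m-1$. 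For the refinement when $m\equiv 2\pmod 3$ I would analyze representability at the prime $3$: here $3\mid m-2$, so $N\equiv L\pmod 3$ and the residue of $\sum a_ix_i$ couples to the band structure; a short local computation then shows that the tightest rank-$(m-2)$ configuration fails a necessary condition modulo $3$, dropping the bound to $m-3$.

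For the lower bound I would construct an explicit chain realizing the claimed value. Guided by the band picture, the coefficients should be mostly $1$'s together with a bounded tail of larger entries, chosen so that exactly one gap between consecutive bands persists until the final coefficient is appended. Concretely I would take a sequence with at most $m-5$ ones and a short increasing tail, verify via (\ref{coe}) that every proper prefix is a legitimate node by exhibiting, for each prefix, one explicitly un-represented integer (so the prefix is non-universal and the next coefficient is a genuine escalator), and finally verify that the full rank-$(m-2)$ (resp. rank-$(m-3)$) form is universal by checking through Theorem~\ref{C} that it represents every integer up to $\gamma_m$; the band overlap reduces this to a finite, explicit computation. The congruence dichotomy reappears here, since when $m\nequiv 2\pmod 3$ the rank-$(m-2)$ tail can be closed up while the local obstruction at $3$ permits closing it only at rank $m-3$ when $m\equiv 2\pmod 3$.

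The main obstacle I expect is the universality criterion itself: proving rigorously that for these large-rank escalator forms universality is decided by the first few bands — equivalently, by a covering/subset-sum condition together with the local conditions at $2$ and $3$ — with constants sharp enough to pin the answer to exactly $m-2$ rather than the linear bound $C(m-2)$ of Theorem~\ref{C}. Controlling the bands uniformly, in particular ruling out unexpected gaps created by the few large coefficients in the tail and carrying out the $3$-adic analysis that separates the two congruence cases, is where the real work lies; the hypothesis $m>6C^2(C+1)$ should be precisely what forces the finite band check to dominate all the lower-order error terms.
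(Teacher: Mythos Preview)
Your proposal has a genuine gap in the lower-bound construction and misidentifies the mechanism behind the mod~$3$ dichotomy.

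You propose to realize $R_m$ by a leaf whose coefficients are ``mostly $1$'s together with a bounded tail of larger entries''. But the paper shows (Lemma~\ref{A=1} and Remark~\ref{A=1rmk}) that any leaf with $a_1=\cdots=a_{(C-2)l_m+5}=1$ has rank at most $m-4$; forms dominated by $1$'s become universal too early to reach rank $m-2$. The extremal leaves are instead of the shape $(1,1,3,3,\dots,3,a_n)$ or $(1,2,3,3,\dots,3,a_n)$ --- only two small initial coefficients followed by a long run of $3$'s --- and these are what fill out ranks $m-2$ and $m-3$ (see Remark~\ref{rmk R}). The reason is that with $(a_1,a_2)\in\{(1,1),(1,2)\}$ and all later $a_i=3$, the form represents only two residue classes modulo $3$ in each band, so the truant persists much longer than for the all-ones form. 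Your ``mostly $1$'s plus tail'' construction will not achieve rank $m-2$.

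Correspondingly, the dichotomy on $m\pmod 3$ is not a local $3$-adic obstruction of the kind you sketch. It arises because the two candidate slow branches, $(1,1,3,3,\dots)$ and $(1,2,3,3,\dots)$, switch roles: which one stays non-universal the longest depends on the residue of $m$ modulo $3$, via the arithmetic of when the running sum $2+3(k-2)$ or $3+3(k-2)$ first reaches the threshold $3(m-3)-1$ in Lemma~\ref{A=3}. When $m\equiv 2\pmod 3$ the $(1,1,3,\dots)$ branch closes one step sooner (see \eqref{A=3,last} and \eqref{A=3,last'}), giving $R_m=m-3$. Your upper-bound outline, while using the correct band decomposition, also does not isolate the $A=3$ case as the critical one; the paper's argument requires showing separately that every branch with $A\notin\{1,3\}$ has rank $<m-4$ (Lemmas~\ref{13}, \ref{k(A)+1}, \ref{AAAAA}) before handling $A=1$ and $A=3$ as the only candidates for the maximum.
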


\vskip 0.5em

\begin{rmk} \label{rmk R}
For $m>6C^2(C+1)$, there would be exactly $3m-12$ and $3m-14$ leaves of  the $\rank \ R_m$ when $m\equiv0$ and $m\not\equiv0 \pmod{3}$, respectively. 
In particular, we would characterize all of the leaves of $\rank R_m$ as follows.
\begin{itemize}
\item[1.]When $m\equiv 0 \pmod{3}$, all of the leaves of $\rank \ R_m$ are $$P_m(x_1)+P_m(x_2)+\sum \limits_{k=3}^{m-3}3P_m(x_k)+a_{m-2}P_m(x_{m-2})$$ where $3 \le a_{m-2} \le 3m-10.$\\
\item[2.]When $m\equiv 1 \pmod{3}$, all of the leaves of $\rank \ R_m$ are $$P_m(x_1)+P_m(x_2)+\sum \limits_{k=3}^{m-3}3P_m(x_k)+a_{m-2}P_m(x_{m-2})$$ where $3 \le a_{m-2} \le 3m-12.$\\
\item[3.]When $m\equiv 2 \pmod{3}$, all of the leaves of $\rank \ R_m$ are $$P_m(x_1)+2P_m(x_2)+\sum \limits_{k=3}^{m-4}3P_m(x_k)+a_{m-3}P_m(x_{m-3})$$ where $3 \le a_{m-3} \le 3m-12.$
\end{itemize}
\end{rmk}

\vskip 1em

\section{A universal $m$-gonal form of the minimal rank : The most fastly escalated universal $m$-gonal form}
We may see that a node $\sum \limits_{i=1}^ka_iP_m(x_i)$ with $a_1+\cdots +a_k <m-4$ of the esacalator tree have the truant $a_1+\cdots+a_k+1$.
So a leaf $\sum \limits_{i=1}^n a_iP_m(x_i)$(which is universal) having no truant of the tree should satisfy the followings 
\begin{equation}\label{3.1}
\begin{cases}a_1=1 & \\ 
a_{i+1}\le a_1+\cdots +a_i+1 & \text{if } a_1+\cdots +a_i <m-4 \\ 
a_1+\cdots +a_n \ge m-4. & \end{cases}\end{equation}
From the first and second conditions in (\ref{3.1}), we may obtain that for any leaf $\sum \limits_{i=1}^n a_iP_m(x_i)$, 
\begin{equation}\label{3.2}a_{k+1}\le 2^k\end{equation}
holds whenever $a_1+\cdots +a_k<m-4$.
And then with the third condition in (\ref{3.1}) and (\ref{3.2}), we may induce that every leaf $\sum \limits_{i=1}^n a_iP_m(x_i)$ has the rank $n$ greater than or equal to $\ceil{\log_2(m-3)}$.

Note that there is a node 
\begin{equation}\label{mf}\sum \limits_{i=1}^k2^{i-1}P_m(x_i)\end{equation} 
where $k=\ceil{\log_2(m-3)}$ of the tree.
The $m$-gonal form (\ref{mf}) would be one of the most fastly escalated $m$-gonal forms to represent up to $m-4$.
For sufficiently large $m>2\left(\left(2C+\frac{1}{4}\right)^{\frac{1}{4}}+\sqrt{2}\right)^2$ with $2 \le 2^{\ceil{\log_2(m-3)}}-m$, by showing that the $m$-gonal form (\ref{mf}) is universal, we claim that the $m$-gonal form is a leaf of the minimal $\rank r_m$, yielding the $m$-gonal form is indeed a universal $m$-gonal form of the minimal rank.

\begin{lem} \label{rm1}
For $m>2\left(\left(2C+\frac{1}{4}\right)^{\frac{1}{4}}+\sqrt{2}\right)^2$ with $2 \le 2^{\ceil{\log_2(m-3)}}-m$, the $m$-gonal form 
\begin{equation}\label{minm}
F_m(\mathbf x)=P_m(x_1)+2P_m(x_2)+\cdots+2^{n-1}P_m(x_n)
\end{equation}
where $n=\ceil{\log_2(m-3)}$ is universal.
\end{lem}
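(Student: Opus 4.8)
The plan is to deduce universality from the finiteness theorem together with the elementary identity
\[
P_m(t)=(m-2)\binom{t}{2}+t ,
\]
which rewrites the form as $F_m(\mathbf x)=(m-2)A+B$, where $A=\sum_{i=1}^{n}2^{i-1}\binom{x_i}{2}\ge 0$ and $B=\sum_{i=1}^{n}2^{i-1}x_i\in\z$. By Theorem \ref{C} it is enough to represent every $N$ with $1\le N\le C(m-2)$, so I would fix such an $N$ and aim to realize it with a bounded ``quotient'' $A=A_0:=\floor{N/(m-2)}\le C$ and remainder $B=N-(m-2)A_0$, where $0\le B\le m-3$. Here the two hypotheses enter at opposite ends: the bound $m>2\left(\left(2C+\frac14\right)^{1/4}+\sqrt2\right)^2$ keeps $C$ negligible against the scale of $m$, while $2\le 2^{n}-m$ (together with $2^{n}<2(m-3)$, which is automatic from $n=\ceil{\log_2(m-3)}$) guarantees $m+2\le 2^{n}\le 2m-7$, so that there are many free high-index variables.

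First I would record the arithmetic of the pair $\left(\binom{x_i}{2},x_i\right)$: one has $\binom{x_i}{2}=0$ exactly for $x_i\in\{0,1\}$ and $\binom{x_i}{2}=1$ exactly for $x_i\in\{-1,2\}$. Since $A_0\le C<2^{n}$, I can therefore pin $A$ to $A_0$ by reading off the binary expansion of $A_0$: put $x_i\in\{-1,2\}$ on the support $S$ of that expansion (a set of bounded, low indices, $S\subseteq\{1,\dots,\ell\}$ with $2^{\ell}>C$) and $x_i\in\{0,1\}$ off $S$. The decisive point is that this leaves, for every index, a genuine binary choice that still moves $B$: on $S$ the two options $x_i=-1$ and $x_i=2$ contribute $-2^{i-1}$ or $+2^{i}$ (differing by $3\cdot 2^{i-1}$), and every index outside $S$ is a free $\{0,1\}$ variable.

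Second, with $A$ now fixed to $A_0$, I would show that $B$ sweeps out a full interval of integers containing the target $N-(m-2)A_0$. The variables of index exceeding $\ell$ are unconstrained $\{0,1\}$ variables carrying the consecutive coefficients $2^{\ell},2^{\ell+1},\dots,2^{n-1}$, so they contribute exactly the multiples of $2^{\ell}$ in $[0,\,2^{n}-2^{\ell}]$; by $2^{n}\ge m+2$ this progression already overshoots $B\le m-3$. It then remains to check that the bottom variables, subject only to $\sum_{i\le\ell}2^{i-1}\binom{x_i}{2}=A_0$, can hit every residue class modulo $2^{\ell}$ with a representative small enough to leave the complementary multiple of $2^{\ell}$ in range. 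Granting this, I select the residue matching $B$, read off the free high variables, and obtain $B=N-(m-2)A_0$, hence $F_m(\mathbf x)=N$.

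The hard part is exactly this last coupled covering statement. Fixing the $x_i$ to control $A$ deletes coefficients from the ``free pool,'' which can destroy the consecutiveness of the attainable values of $B$, so a priori some residue modulo $2^{\ell}$ might be missed, or attained only by a representative too large to fit under $2^{n}$. The saving feature is that the active choice $x_i=2$ contributes the \emph{higher} power $+2^{i}$, which can cancel a $-2^{i-1}$ coming from another active index; exploiting these cancellations one proves, by a carry-propagating induction on the bits, that all $2^{\ell}$ residues are attained with representatives of controlled size. Balancing the two requirements on $2^{\ell}$—large enough to encode $A_0\le C$, yet small enough that the residue representatives stay below the reach $2^{n}\approx m$ of the free progression—is what forces the explicit threshold on $m$ and produces the fourth-root constant $\left(2C+\frac14\right)^{1/4}$. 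I expect this balancing, rather than any single estimate, to be the genuine obstacle; everything else is the bookkeeping of the decomposition $F_m=(m-2)A+B$.
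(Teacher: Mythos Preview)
Your decomposition $P_m(t)=(m-2)\binom{t}{2}+t$ and the plan to pin down $A=A_0$ first and then sweep $B$ with the high $\{0,1\}$ variables is exactly the framework the paper uses. The gap is in the step you flag as ``the hard part,'' and it is a real one, not just bookkeeping.

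With your binary-expansion scheme, for each residue class modulo $2^{\ell}$ there is \emph{exactly one} admissible bottom value $r=\sum_{i\le\ell}2^{i-1}x_i$ (you have $2^{\ell}$ configurations and, as you correctly argue, they hit all $2^{\ell}$ residues). That single representative is not ``of controlled size'': it lies somewhere in $[-A_0,\,A_0+2^{\ell}-1]$, and the endpoints are genuinely attained. For instance take $A_0=1$, so $S=\{1\}$, $x_1\in\{-1,2\}$, $x_2,\dots,x_\ell\in\{0,1\}$. The unique configuration with $r\equiv 0\pmod{2^{\ell}}$ is $x_1=2,\ x_2=\cdots=x_\ell=1$, giving $r=2^{\ell}$. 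For $B=0$ (i.e.\ $N=m-2$) you would then need the high part to equal $-2^{\ell}$, which is impossible. No carry-propagation trick rescues this: once $A$ is pinned via the binary digits, the bottom value for each residue is forced. The same overflow occurs at the top end of $B$, since the worst case $B=m-3$, $r=-A_0$ demands a high part of size $m-3+A_0$, whereas the free progression only reaches $2^{n}-2^{\ell}\ge m+2-2^{\ell}$; this fails as soon as $A_0+2^{\ell}>5$.

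There is also a scale mismatch with the stated threshold. Your bottom contribution has magnitude $\Theta(2^{\ell})=\Theta(C)$, so any balancing of the two constraints would force $m\gg C$, a \emph{linear} condition. The hypothesis $m>2\bigl((2C+\tfrac14)^{1/4}+\sqrt{2}\bigr)^2$ is essentially $m\gg\sqrt{C}$, and the fourth-root constant cannot arise from your construction. In the paper the $\sqrt{C}$ and $C^{1/4}$ scales come from a different mechanism: only the first four variables are used to realise $A$, via the universality of the ternary triangular form $P_3(x)+2P_3(y)+4P_3(z)$. One first chooses $y_1$ with $P_3(y_1-1)$ close to $A$ (so $y_1\approx\sqrt{2A}$), then represents the small remainder $A-P_3(y_1-1)=O(\sqrt{A})$ by $2P_3(y_2-1)+4P_3(y_3-1)+8P_3(y_4-1)$, which forces $|y_2|,|y_3|,|y_4|=O(A^{1/4})$. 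The sign symmetry $P_3(t)=P_3(-t-1)$ is then used to adjust $y_1+2y_2+4y_3+8y_4$ to any prescribed residue modulo $16$, and the resulting linear part is $\sqrt{2A}+O(A^{1/4})<\tfrac{m-2}{2}$, which is precisely the origin of the threshold. The remaining $16P_m(x_5)+\cdots+2^{n-1}P_m(x_n)$ then covers the multiples of $16$ up to $m-11$. For small $A$ (up to $160$) these asymptotic estimates are unavailable and the paper resorts to a finite computer search, together with ad hoc representations for a short list of exceptional residues. That computational component is not an artefact: the clean inductive picture you sketch does not exist here.
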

\begin{proof}
In virtue of the Theorem \ref{C}, it may be enough to show that $F_m(\mathbf x)$ represents every positive integer up to only $C(m-2).$
Throughout this proof, we write the integers in $[1,C(m-2)]$ as $$A(m-2)+B$$ where $0\le A \le C$ and $0 \le B \le m-3$.

Note that 
\begin{align*}
F_m(\mathbf x) = & (m-2)\{ P_3(x_1-1)+2P_3(x_2-1)+4P_3(x_3-1)+8P_3(x_4-1)\} \\
& + (x_1+2x_2+4x_3+8x_4)+16P_m(x_5)+\cdots+2^{n-1}P_m(x_n).\end{align*}
For a non-negative integer $A$, let $x(A)$ be the largest positive integer satisfying $$P_3(x(A)-1) \le A ,$$
i.e., the integer in the interval $(\sqrt{2A+\frac{1}{4}}-\frac{1}{2}, \sqrt{2A+\frac{1}{4}}+\frac{1}{2}]$.
On the other hand, there would be exactly one $$y_1(A,B) \in \{x(A), x(A)-1, x(A)-2, x(A)-3\}$$ satisfying
$$\begin{cases}
P_3(y_1(A,B)-1) \equiv A \pmod{2} \\
y_1(A,B) \equiv B  \pmod{2}.
\end{cases}$$
Since the ternary triangular form $$P_3(x)+2P_3(y)+4P_3(z)$$ is universal, the even integer $A-P_3(y_1(A,B)-1)$ may be written as
$$2P_3(y_1(A,B)-1)+4P_3(y_2(A,B)-1)+8P_3(y_3(A,B)-1)$$
for some $y_i(A,B) \in \z$, i.e., $$A=P_3(y_1(A,B)-1)+2P_3(y_1(A,B)-1)+4P_3(y_2(A,B)-1)+8P_3(y_3(A,B)-1).$$
Beside that since $P_3(x)=P_3(-x-1)$, if it is necessary, by changing $y_i(A,B)-1$ to $-y_i(A,B)$, we may assume that 
\begin{equation}\label{y}
\begin{cases} A=P_3(y_1(A,B)-1)+\cdots +8P_3(y_4(A,B)-1) \\B \equiv y_1(A,B)+2y_2(A,B)+4y_3(A,B)+8y_4(A,B) \pmod{16}. \end{cases}
\end{equation}
Since the integer $y_1(A,B)$ in (\ref{y}) is in $[\sqrt{2A+\frac{1}{4}}-\frac{7}{2}, \sqrt{2A+\frac{1}{4}}+\frac{1}{2}]$ we may get that 
\begin{align*}
0 & \le A-P_3(y_1(A,B)-1)  \\
 & =2P_3(y_2(A,B)-1)+4P_3(y_3(A,B)-1)+8P_3(y_4(A,B)-1)<4\sqrt{2A+\frac{1}{4}}-8.\\
\end{align*}
By arranging the above inequality, we may obtain
\begin{equation}\label{sqrt}\left(y_2(A,B)-\frac{1}{2}\right)^2+2\left(y_3(A,B)-\frac{1}{2} \right)^2+4\left(y_4(A,B)-\frac{1}{2} \right)^2<4\sqrt{2A+\frac{1}{4}}-\frac{25}{4}\end{equation}
and through a basic calculation, we may get that $$|2y_2(A,B)+4y_3(A,B)+8y_4(A,B)|<14\sqrt{\frac{4\sqrt{2A+\frac{1}{4}}-\frac{25}{4}}{73}}.$$
So for $y_1(A,B) \in [\sqrt{2A+\frac{1}{4}}-\frac{7}{2}, \sqrt{2A+\frac{1}{4}}+\frac{1}{2}]$, we may get that such the above $y_i(A,B)$ where $160 \le A \le C$ satisfy
\begin{align*}
0 & < \sqrt{2A+\frac{1}{4}}-14\sqrt{\frac{4\sqrt{2A+\frac{1}{4}}-\frac{25}{4}}{73}}-\frac{7}{2}\\
 & < y_1(A,B)+2y_2(A,B)+4y_3(A,B)+8y_4(A,B) \\ & <\sqrt{2A+\frac{1}{4}}+14\sqrt{\frac{4\sqrt{2A+\frac{1}{4}}-\frac{25}{4}}{73}}+\frac{1}{2} < \frac{m-2}{2}.
\end{align*}
Through similar processings with the above we may obtain  
$$\begin{cases} A=P_3(z_1(A,B)-1)+\cdots+8P_3(z_4(A,B)-1) \\ B \equiv z_1(A,B)+2z_2(A,B)+4z_3(A,B)+8z_4(A,B)  \pmod{16}\end{cases}$$
hold for some $z_1(A,B) \in \{-x(A)+1, -x(A)+2,-x(A)+3,-x(A)+4\}$ and $z_i(A,B) \in \z$ for $i=2,3,4$.
And in this case, we may get that such the above $z_i(A,B)$ with $160 \le A \le C$ satisfy 
\begin{align*}-\frac{m-2}{2} & < -\sqrt{2A-\frac{1}{4}}-14\sqrt{\frac{4\sqrt{2A+\frac{1}{4}}-\frac{25}{4}}{73}}+\frac{1}{2}  \\ & \le z_1(A,B)+2z_2(A,B)+4z_3(A,B)+8z_4(A,B) \\ 
& <-\sqrt{2A+\frac{1}{4}}+14\sqrt{\frac{4\sqrt{2A+\frac{1}{4}}-\frac{25}{4}}{73}}+\frac{9}{2}<0.\end{align*}
And then one may easily see that for each integer $A(m-2)+B \in [160(m-2), C(m-2)]$, $$\text{either } \ (x_i(A,B))=(y_i(A,B)) \ \text{ or } \ (x_i(A,B))=(z_i(A,B))$$ satisfies 
$$0 \le A(m-2)+B-\{P_m(x_1(A,B))+\cdots
+8P_m(x_4(A,B))\} \le m-11$$
with $$A(m-2)+B-\{P_m(x_1(A,B))+\cdots
+8P_m(x_4(A,B))\}\equiv 0 \pmod{16}.$$
On the other hand, remaining $16P_m(x_5)+\cdots +2^{n-1}P_m(x_n)$ may represent all the multiples of $16$ up to $m-11(\le 2^n-16)$
by taking $P_m(x_i) \in \{0,1\}$ for all $5\le i \le n$ which yields that $A(m-2)+B$ may be represented by $F_m(\mathbf x)$ as follows 
$$P_m(x_1(A,B))+\cdots+8P_m(x_4(A,B))+16P_m(x_5)+\cdots +2^{n-1}P_m(x_n)$$
for some $(x_5,\cdots, x_n) \in \{0,1\}^{n-4}$.
Until now, we showed that $$F_m(\mathbf x)=P_m(x_1)+2P_m(x_2)+\cdots+2^{n-1}P_m(x_n)$$ represents every positive integer in $[160(m-2),C(m-2)].$

In the remaining of this proof, we show that $F_m(\mathbf x)$ represents every positive integer in $[1, 160(m-2)]$.
Through direct calcuations (the authors used python), we may obtain that for each $(A,r_B) \in \z \times \z/8\z$ with $0 \le A \le 160$, there are integer solutions $(x_1,x_2,x_3)\in \z^3$ for both of
\begin{equation} \label{pos}
\begin{cases}P_3(x_1-1)+2P_3(x_2-1)+4P_3(x_3-1)=A \\ 
x_1+2x_2+4x_3 \equiv r_B \pmod{8} \\ 
0 \le x_1+2x_2+4x_3 <100 \ll \frac{m-2}{2}
\end{cases}
\end{equation} 
and 
\begin{equation} \label{neg}
\begin{cases}
P_3(x_1-1)+2P_3(x_2-1)+4P_3(x_3-1)=A \\ 
x_1+2x_2+4x_3 \equiv r_B \pmod{8} \\ 
-\frac{m-2}{2}\ll -100<x_1+2x_2+4x_3 \le 0 
\end{cases}
\end{equation} 
respectively except the pairs $(A,r_B)$ in $S^+ \cup S^- (\subset \z \times \z/8\z)$ where 
\begin{equation}\label{(a,r)pos}
\begin{array} {lllllll}
S^+:= &\{(0,0),  \!&(0,1),  \!&(0,2),\!& (0,3), \!& (0,4),  \!& (0,5),  \\
\!\!\!\!&\  (0,6),  \!&(0,7),  \!&  (1,0),  \!&(1,1),  \!&(1,2),\!& (1,3),  \\
\!\!\!\!&\   (1,4),  \!& (1,5), \!& (1,6),  \!&(2,0),  \!&(2,1),\!& (2,2),\\
\!\!\!\!&\   (2,3),  \!& (2,4),\!& (2,5),  \!&(3,1),  \!&(3,2),\!& (3,3), \\
\!\!\!\!&\  (3,4),  \!& (3,7), \!& (4,0),  \!&(4,1),  \!&(4,2),\!& (4,3), \\
\!\!\!\!&\   (5,1),  \!& (5,2),\!& (5,7),  \!&(6,0),  \!&(6,1),\!& (6,6),  \\
\!\!\!\!&\   (7,0),  \!& (7,5), \!& (8,0),  \!&(8,1),  \!&(8,2),\!& (8,4),  \\
\!\!\!\!&\  (8,5),  \!& (8,6), \!& (9,1),  \!&(9,3),  \!&(9,4),\!& (10,2), \\
\!\!\!\!&\  (10,5),  \!& (10,7),\!& (11,0),  \!&(11,3),  \!&(11,4),\!& (11,5),  \\
\!\!\!\!&\  (11,6),  \!& (12,4), \!&  (13,0),  \!&(14,1),  \!&(14,3),\!& (15,1),\\
\!\!\!\!&\  (15,2),  \!& (16,1),\!& (16,2),  \!&(16.3),  \!&(17,3),\!& (17,6), \\
\!\!\!\!&\  (18,0),  \!& (18,1),\!& (18,2),  \!&(19,0),  \!&(19,2),\!& (20,7), \\
\!\!\!\!&\  (21,0),  \!& (21,1), \!& (22,0),  \!&(22,1),  \!&(23,1),\!& (23,5), \\
\!\!\!\!&\  (25,2),  \!& (26,7),\!& (28,4),  \!&(18,6),  \!&(29,2),\!& (31,3), \\
\!\!\!\!&\  (32,2),  \!& (32,3), \!& (35,2),  \!&(36,1),  \!&(37,1),\!& (37,2),  \\
\!\!\!\!&\  (37,7),  \!& (38,0),\!& (43,1),  \!&(43,4),  \!&(44,0),\!& (44,1), \\
\!\!\!\!&\  (50,2),  \!& (51,6),\!&  (53,0),  \!&(53,3),  \!&(53,5),\!& (54,7), \\
\!\!\!\!&\  (58,2),  \!&(64,2),\!& (64,3), \!& (65,1),  \!& (65,4),\!& (72,2), \\
\!\!\!\!&\  (74,2), \!&(75,0),\!&(81,1),  \!& (85,5),  \!& (92,4),\!& (106,2), \\  
\!\!\!\!&\  (106,3), \!&(110,2),\!&(116,3),  \!&(123,1),  \!&(128,2),\!& (128,3)\} \\\end{array}
\end{equation}
and 
\begin{equation}\label{(a,r)neg}
\begin{array} {lllllll}
S^-:=&\{(1,7),  \!&(2,6),  \!&(2,7),\!& (4,4), \!& (4,5),  \!& (4,6),  \\
\!\!\!\!&\  (4,7),  \!&(7,7),  \!&(8,3),\!& (8,7), \!& (9,6),  \!& (10,0),  \\
\!\!\!\!&\  (11,2),  \!&(11,7),  \!&(15,5),\!& (18,6), \!& (18,7),  \!& (20,0),  \\
\!\!\!\!&\  (22,7),  \!&(23,6),  \!&(26,0),\!& (28,1), \!& (31,4),  \!& (36,6),  \\
\!\!\!\!&\  (37,0),  \!&(37,5),  \!&(44,6),\!& (44,7), \!& (53,2),  \!& (53,4),  \\
\!\!\!\!&\  (53,7),  \!&(54,0), \!& (106,4),  \!& (110,5),\!&(116,4), \!&(128,4),  \\
\!\!\!\!&\  (128,5)\}.  \!&   \!&  \!& \!& \!& \\
\end{array}
\end{equation}
Each pair $(A,r_B) \in S^+$ has integer solutions for only (\ref{pos}) and not for (\ref{neg}) and each pair $(A,r_B) \in S^-$ has integer solutions for only (\ref{neg}) and not for (\ref{pos}).
And then, we may easily see that for each 
$$1 \le A(m-2)+B \le 160(m-2)$$ 
with $(A,r_B) \notin S^+ \cup S^-$ where $r_B$ is the residue of $B$ modulo $8$ one of the above integer solutions $(x_1,x_2,x_3) \in \z^3$ for (\ref{pos}) or (\ref{neg}) satisfies
\begin{equation}\label{124;ab}\begin{cases} A(m-2)+B\equiv P_m(x_1)+2P_m(x_2)+4P_m(x_3) \pmod{8} \\
0 \le A(m-2)+B-\{ P_m(x_1)+2P_m(x_2)+4P_m(x_3)\} < m-2.
\end{cases}\end{equation}
Denote an integer solution for (\ref{124;ab}) by $(x_1(A,B),x_2(A,B),x_3(A,B)) \in \z^3$.
Since the remaining $$8P_m(x_4)+\cdots +2^{n-1}P_m(x_n)$$ would represents $$A(m-2)+B-\{ P_m(x_1(A,B))+2P_m(x_2(A,B))+4P_m(x_3(A,B))\}$$(which is a multiple of $8$ in $[0,m-3]$ from (\ref{124;ab})) by taking $P_m(x_i) \in \{0,1\}$ for all $4 \le i \le n$, we may obtain that every integer $A(m-2)+B$ in $[1, 160(m-2)]$ with $(A,r_B) \notin S^+ \cup S^-$ is represented by $$F_m(\mathbf x)=P_m(x_1)+2P_m(x_2)+\cdots+2^{n-1}P_m(x_n)$$

To complete the proof, we finally consider the integers $$A(m-2)+B \in [1,160(m-2)]$$ where $(A,r_B) \in S^+ \cup S^-$.
Among the pairs $(A,r_B)$ in $S^+ \cup S^-$, consider $(116,3) \in S^+$.
From
\begin{equation}\label{116,3}P_m(1)+2P_m(-7)+4P_m(6)=116(m-2)+11,\end{equation}
we may yield that every positive integer $A(m-2)+B \in [1,160(m-2)]$ with $(A,r_B)=(116,3)$ may be represented by $F_m(\mathbf x)$ other than $116(m-2)+3$.
On the other hand, $116(m-2)+3$ may be equivalent with one $115(m-2)+B'$ of the 
$$\begin{cases}
115(m-2)+32=P_m(-2)+2P_m(1)+4P_m(8)\\
115(m-2)+41=P_m(3)+2P_m(9)+4P_m(5)\\
115(m-2)+42=P_m(2)+2P_m(6)+4P_m(7)\\
115(m-2)+35=P_m(3)+2P_m(0)+4P_m(8)\\
115(m-2)+36=P_m(-2)+2P_m(9)+4P_m(5)\\
115(m-2)+37=P_m(11)+2P_m(1)+4P_m(6)\\
115(m-2)+38=P_m(2)+2P_m(10)+4P_m(4)\\
115(m-2)+39=P_m(-1)+2P_m(6)+4P_m(7)\\
\end{cases}$$
modulo $8$ because $\{32, 41,42,35,36, 37, 38, 39\}$ form a complete set of residues modulo $8$. 
By agian using the fact that the remaining $8P_m(x_4)+\cdots +2^{n-1}P_m(x_n)$ represents all the multiples of $8$ up to $m-3$, we may obtaint that $116(m-2)+3$ is also represented by $F_m(\mathbf x)$ because $$(116(m-2)+3)-(115(m-2)+B') $$ would be a multiple of $8$ in $[0,m-3]$.
Similarly with the above case $(A,r_B)=(116,3) \in S^+\cup S^-$, through surveys based on the result of integer solutions for (\ref{pos}) and (\ref{neg}) with the fact that the remaining $m$-gonal subform $8P_m(x_4)+\cdots+2^{n-1}P_m(x_n)$ of $F_m(\mathbf x)$ represents all the multiples of $8$ up to $m-3(\le 2^n-8)$
, we may get that every integer $A(m-2)+B$ in $[1,160(m-2)]$ with $(A,r_B) \in S^+ \cup S^-$ other than 
\begin{equation}\label{TT}\begin{array} {lll}
\!\!\!\! 37(m-2)+7, \!&37(m-2)+15, \!&37(m-2)+m-18,  \\
\!\!\!\!37(m-2)+m-16, \!&37(m-2)+m-11, \!&37(m-2)+m-10 \\  
\end{array}\end{equation}
may be represented by $F_m(\mathbf x)$ by taking $(x_4,\cdots,x_n) \in \{0,1\}^{n-3}$.
In the meanwhile, we may directly check that the representability of integers in (\ref{TT}) by $F_m(\mathbf x)$ as follows
$$\begin{cases}
37(m-2)+7= P_m(-1)+2P_m(4)+4P_m(0)+8P_m(-2)+16P_m(1)\\
37(m-2)+15= P_m(-1)+2P_m(0)+4P_m(-2)+8P_m(3)\\
37(m-2)+m-18= P_m(4)+2P_m(0)+4P_m(-3)+8P_m(-1)\\
37(m-2)+m-16= P_m(4)+2P_m(1)+4P_m(-3)+8P_m(-1)\\
37(m-2)+m-11= P_m(-3)+2P_m(-3)+4P_m(-2)+8P_m(-1)+16P_m(1)\\
37(m-2)+m-10= P_m(-4)+2P_m(0)+4P_m(-1)+8P_m(-2)+16P_m(1).\\
\end{cases}$$

Consequently, we may conclude that $$F_m(\mathbf x)=P_m(x_1)+2P_m(x_2)+\cdots+2^{n-1}P_m(x_n)$$ represents every positive integer up to $C(m-2)$, yielding the $F_m(\mathbf x)$ is universal by the Theorem \ref{C}.
\end{proof}

\vskip 0.5em

\begin{rmk} \label{rmk(rm)1}
In Lemma \ref{rm1}, we proved that for sufficiently large $m$ with $2 \le 2^{\ceil{\log_2(m-3)}}-m$, $$r_m=\ceil{\log_2(m-3)}$$
by showing that $$P_m(x_1)+2P_m(x_2)+4P_m(x_3)+\cdots +2^{n-1}P_m(x_n)$$ where $n=\ceil{\log_2(m-3)}$ is universal.
Since $-3 \le 2^{\ceil{\log_2(m-3)}}-m$, in order to complete the proof of Theorem \ref{min} now we need to consider $r_m$ for $-3 \le 2^{\ceil{\log_2(m-3)}}-m\le 4$.

\begin{itemize}
\item[(1)]
When $-3 = 2^{\ceil{\log_2(m-3)}}-m$, there is only one $m$-gonal form which represents every positive integer up to $m-4$ with the $\rank n= \ceil{\log_2(m-3)}$ that is $$P_m(x_1)+2P_m(x_2)+\cdots+2^{n-1}P_m(x_n)$$
which does not represent $m-2$, i.e., there is no $m$-gonal form which represents every positive integer up to $m-2$ of the $\rank$ less than or equal to $\ceil{\log_2(m-3)}$.
Clearly we may have that there is no universal $m$-gonal form of the $\rank$ less than or equal to ${\ceil{\log_2(m-3)}}$ when $-3 = 2^{\ceil{\log_2(m-3)}}-m$ which implies that $${\ceil{\log_2(m-3)}}+1 \le r_m.$$

\item[(2)]
When $-2 = 2^{\ceil{\log_2(m-3)}}-m$, all of the $m$-gonal forms which represent every positive integer up to $m-4$ with the $\rank n= \ceil{\log_2(m-3)}$ are 
$$\begin{cases}
P_m(x_1)+2P_m(x_2)+\cdots+2^{n-1}P_m(x_n) \text{ and }\\ 
$$P_m(x_1)+2P_m(x_2)+\cdots+(2^{n-1}-1)P_m(x_n)\\
\end{cases}$$
which do not represent $m-2$.
So in this case too, we may have that $${\ceil{\log_2(m-3)}}+1 \le r_m.$$


\item[(3)]
When $-1 = 2^{\ceil{\log_2(m-3)}}-m$, all of the $m$-gonal forms which represent every positive integer up to $m-4$ with the $\rank n= \ceil{\log_2(m-3)}$ are 
$$\begin{cases}
P_m(x_1)+2P_m(x_2)+\cdots+2^{n-1}P_m(x_n)\\ 
P_m(x_1)+2P_m(x_2)+\cdots+(2^{n-1}-1)P_m(x_n)\\ 
P_m(x_1)+2P_m(x_2)+\cdots+(2^{n-1}-2)P_m(x_n)\\
\end{cases}$$
which do not represent $2m-4$, $m-2$, and $m-2$, respectively which yields that $${\ceil{\log_2(m-3)}}+1 \le r_m.$$

\item[(4)]
When $0 = 2^{\ceil{\log_2(m-3)}}-m$, the all of $m$-gonal forms which represent every positive integer up to $m-4$ with the $\rank n= \ceil{\log_2(m-3)}$ are 
$$\begin{cases}
P_m(x_1)+2P_m(x_2)+\cdots+2^{n-2}P_m(x_{n-1})+2^{n-1}P_m(x_n)\\
P_m(x_1)+2P_m(x_2)+\cdots+2^{n-2}P_m(x_{n-1})+(2^{n-1}-1)P_m(x_n)\\
P_m(x_1)+2P_m(x_2)+\cdots+2^{n-2}P_m(x_{n-1})+(2^{n-1}-2)P_m(x_n)\\
P_m(x_1)+2P_m(x_2)+\cdots+2^{n-2}P_m(x_{n-1})+(2^{n-1}-3)P_m(x_n)\\
P_m(x_1)+2P_m(x_2)+\cdots+(2^{n-2}-1)P_m(x_{n-1})+(2^{n-1}-2)P_m(x_n)
\end{cases}$$
which do not represent $2m-3$, $2m-4$, $m-2$, $m-2$, and $m-2$, respectively which yields that $${\ceil{\log_2(m-3)}}+1 \le r_m.$$

\item[(5)]
When $1 = 2^{\ceil{\log_2(m-3)}}-m$, all of the $m$-gonal forms which represent every positive integer up to $m-4$ with the $\rank n= \ceil{\log_2(m-3)}$ are 
$$\begin{cases}
P_m(x_1)+2P_m(x_2)+\cdots+2^{n-2}P_m(x_{n-1})+2^{n-1}P_m(x_n)\\
P_m(x_1)+2P_m(x_2)+\cdots+2^{n-2}P_m(x_{n-1})+(2^{n-1}-1)P_m(x_n)\\
P_m(x_1)+2P_m(x_2)+\cdots+2^{n-2}P_m(x_{n-1})+(2^{n-1}-2)P_m(x_n) \\
$$ P_m(x_1)+2P_m(x_2)+\cdots+2^{n-2}P_m(x_{n-1})+(2^{n-1}-3)P_m(x_n)\\ 
$$ P_m(x_1)+2P_m(x_2)+\cdots+2^{n-2}P_m(x_{n-1})+(2^{n-1}-4)P_m(x_n)\\
$$ P_m(x_1)+2P_m(x_2)+\cdots+(2^{n-2}-1)P_m(x_{n-1})+(2^{n-1}-2)P_m(x_n)\\
$$ P_m(x_1)+2P_m(x_2)+\cdots+(2^{n-2}-1)P_m(x_{n-1})+(2^{n-1}-3)P_m(x_n)\\
\end{cases}$$
which do not represent $5(m-2)-1$, $2m-3$, $2m-4$, $m-2$, $m-2$, $m-2$, and $m-2$, respectively which yields that $${\ceil{\log_2(m-3)}}+1 \le r_m.$$
\item[(6)]
When $2\le  2^{\ceil{\log_2(m-3)}}-m \le 4$, one may see that $$P_m(x_1)+2P_m(x_2)+\cdots +2^{n-1}P_m(x_n)$$ where $n=\ceil{\log_2(m-3)}$ is universal by showing that the $m$-gonal form represents every positive integer up to $C(m-2)$ through similar processings with the Lemma \ref{rm1}.
But it would require more delicate care to examine the representability of small integers in $[1,160(m-2)]$ than the Lemma \ref{rm1} because of the tighter condition of integers represented by $8P_m(x_4)+\cdots +2^{n-1}P_m(x_n)$.
We omit the proof in this paper.
\end{itemize}
\end{rmk}

\vskip 0.5em

\begin{lem} \label{rm2}
For $m>2\left(\left(2C+\frac{1}{4}\right)^{\frac{1}{4}}+\sqrt{2}\right)^2$, the $m$-gonal form 
\begin{equation}\label{minm'}
F_m(\mathbf x)=P_m(x_1)+2P_m(x_2)+\cdots+2^{n}P_m(x_{n+1})
\end{equation}
where $n=\ceil{\log_2(m-3)}$ is universal.
\end{lem}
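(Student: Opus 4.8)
The plan is to follow the proof of Lemma \ref{rm1} almost verbatim, the only new ingredient being that the extra summand $2^nP_m(x_{n+1})$ enlarges the tail subform enough to render the hypothesis $2\le 2^{\ceil{\log_2(m-3)}}-m$ of Lemma \ref{rm1} unnecessary. As there, by Theorem \ref{C} it suffices to prove that $F_m(\mathbf x)$ represents every positive integer up to $C(m-2)$, and each such integer is written as $A(m-2)+B$ with $0\le A\le C$ and $0\le B\le m-3$. Observe first that (\ref{minm'}) contains (\ref{minm}) as the subform obtained by setting $x_{n+1}=0$, since $P_m(0)=0$. Consequently, whenever $2\le 2^{\ceil{\log_2(m-3)}}-m$ the universality of (\ref{minm'}) is immediate from Lemma \ref{rm1}, so I may assume throughout that $-3\le 2^{\ceil{\log_2(m-3)}}-m\le 1$, which is precisely the range left open in cases (1)--(5) of Remark \ref{rmk(rm)1}.

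For the large integers $A(m-2)+B\in[160(m-2),C(m-2)]$ I would repeat the decomposition
\begin{align*}
F_m(\mathbf x)=&(m-2)\{P_3(x_1-1)+2P_3(x_2-1)+4P_3(x_3-1)+8P_3(x_4-1)\}\\
&+(x_1+2x_2+4x_3+8x_4)+16P_m(x_5)+\cdots+2^nP_m(x_{n+1})
\end{align*}
together with the construction of the tuples $(y_i(A,B))$ and $(z_i(A,B))$ exactly as in Lemma \ref{rm1}. This part of the argument invokes only the universality of the ternary form $P_3(x)+2P_3(y)+4P_3(z)$ and the size bounds on $x_1+2x_2+4x_3+8x_4$ coming from the hypothesis $m>2\left(\left(2C+\frac14\right)^{\frac14}+\sqrt2\right)^2$; it never uses the restriction on $2^{\ceil{\log_2(m-3)}}-m$. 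The only point to re-examine is that the tail represents every multiple of $16$ up to $m-11$, and indeed $16P_m(x_5)+\cdots+2^nP_m(x_{n+1})$ realises, through $P_m(x_i)\in\{0,1\}$, every multiple of $16$ in $[0,2^{n+1}-16]$; since $2^n\ge m-3$ we have $2^{n+1}-16\ge 2m-22\ge m-11$ for all $m$ in our range, so the large integers are disposed of just as before.

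For the small integers $A(m-2)+B\in[1,160(m-2)]$ I would reuse the finite computation of Lemma \ref{rm1} wholesale: the solvability of (\ref{pos}) and (\ref{neg}) outside the explicit exceptional set $S^+\cup S^-$, together with the explicit witnesses recorded for the pairs in $S^+\cup S^-$ and for the finitely many integers in (\ref{TT}), is a statement about the first three variables only and is therefore unaffected. The decisive structural change is that here the tail $8P_m(x_4)+\cdots+2^nP_m(x_{n+1})$ represents every multiple of $8$ in $[0,2^{n+1}-8]$, and $2^{n+1}-8\ge 2m-14\ge m-3$ for $m$ large. Hence this tail represents every multiple of $8$ in $[0,m-3]$ with no hypothesis whatsoever on $2^{\ceil{\log_2(m-3)}}-m$ --- which is exactly the property that the restriction $2\le 2^{\ceil{\log_2(m-3)}}-m$ guaranteed inside Lemma \ref{rm1}. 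Feeding the multiple-of-$8$ remainders produced by (\ref{pos})/(\ref{neg}) into this tail then completes the representation of the small integers verbatim as in Lemma \ref{rm1}.

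I expect the main obstacle to be purely the finite bookkeeping for the small range rather than anything conceptual: one must confirm that every residue class modulo $8$ (and the modulo-$16$ classes entering the large range) is still met by a solution of (\ref{pos})/(\ref{neg}) whose value of $x_1+2x_2+4x_3$ remains in the admissible window $\left(-\frac{m-2}{2},\frac{m-2}{2}\right)$, and that the explicit witnesses for $S^+\cup S^-$ and for (\ref{TT}) continue to leave a remainder that is a multiple of $8$ lying in $[0,m-3]$, now that $2^n$ may be as small as $m-3$. Because the enlarged tail removes the single inequality that was tight in Lemma \ref{rm1}, I expect this verification to go through with strictly more slack, giving that $F_m(\mathbf x)$ represents every positive integer up to $C(m-2)$ and hence is universal by Theorem \ref{C}.
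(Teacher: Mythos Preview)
Your proposal is correct and matches the paper's own proof essentially verbatim: the paper also says the argument of Lemma~\ref{rm1} goes through unchanged, the only new observation being that the enlarged tail $8P_m(x_4)+\cdots+2^nP_m(x_{n+1})$ now represents all multiples of $8$ up to $2m-14$, which removes the need for the hypothesis $2\le 2^{\ceil{\log_2(m-3)}}-m$.
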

\begin{proof}
One may prove this lemma through almost same arguments with the proof of Lemma \ref{rm1}.
Actually, under the assumption of this lemma, the fact that the subform $$8P_m(\mathbf x_4)+\cdots +2^nP_m(\mathbf x_{n+1})$$ of $F_m(\mathbf x)$ represents all the multiples of $8$ up to $2m-14$ make to show this lemma easier than to show Lemma \ref{rm1}.
\end{proof}

\vskip 0.5em

\begin{rmk} \label{rmk(rm)2}
The Lemma \ref{rm2} says that $F_m(\mathbf x)$ is a universal $m$-gonal form of the $\rank \ceil{\log_2(m-3)}+1$.
On the other hand, we observed that there is no universal $m$-gonal form of the $\rank$ less than or equal to $\ceil{\log_2(m-3)}$ when $-3 \le 2^{\ceil{\log_2(m-3)}}-m\le 1$ in Remark \ref{rmk(rm)1}.
Therefore we may obtain that the $F_m(\mathbf x)$ in the Lemma \ref{rm2} is a universal $m$-gonal form of the minimal $\rank$ when $-3 \le 2^{\ceil{\log_2(m-3)}}-m\le 1$ which yields that $r_m=\ceil{\log_2(m-3)}+1$.
With Lemma \ref{rm1}, Remark \ref{rmk(rm)1}, and Lemma \ref{rm2}, we may claim Theorem \ref{min}.
\end{rmk}

\section{The maximal rank for a leaf of the escalator tree}

Throughout this section, we prove Theorem \ref{main}.
\begin{prop}\label{node}
A node $F_m(\mathbf x)=\sum_{i=1}^ka_iP_m(x_i)$ of the escalator tree represents every positive integer up to $\sum _{i=1}^ka_i$.
\end{prop}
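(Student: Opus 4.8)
The plan is to prove the statement by induction on the number $k$ of terms, exploiting the recursive way nodes are built in the escalator tree rather than attempting to represent each integer by hand. Write $s_k := a_1 + \cdots + a_k$ for the partial sums, and recall from (\ref{m number}) that $P_m(0)=0$ and $P_m(1)=1$, so each variable $x_i$ may freely contribute either $0$ or $a_i$. I will also use two elementary facts about $P_m$: it is integer-valued and non-negative on $\z$, and its least positive value is $1$.

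For the base case $k=1$ one has $a_1=1$ (the root $\emptyset$ has truant $1$, so its escalator adds $a_1P_m(x_1)$ to represent $1$), and $P_m(1)=1$ shows that $s_1=1$ is represented. For the inductive step, let $G_{i+1}=\sum_{j=1}^{i+1}a_jP_m(x_j)$ be a node whose parent $G_i=\sum_{j=1}^{i}a_jP_m(x_j)$ already satisfies the proposition, i.e. $G_i$ represents every integer in $[1,s_i]$. Since $G_{i+1}$ is a child, $G_i$ is non-universal; let $t_i$ be its truant. Two observations drive the argument. First, because $G_i$ represents $[1,s_i]$, its truant obeys $t_i\ge s_i+1$, so $G_i$ in fact represents the whole interval $[1,t_i-1]$. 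Second, being an escalator of $G_i$, the node $G_{i+1}$ represents $t_i$; writing $t_i=G_i(\mathbf x)+a_{i+1}P_m(x_{i+1})$ and using $G_i(\mathbf x)\ge 0$ together with the fact that $P_m(x_{i+1})$ cannot be $0$ (otherwise $t_i$ would already be represented by $G_i$) and is hence $\ge 1$, I obtain $a_{i+1}\le t_i$.

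With these two facts the step is immediate. Given $N$ with $1\le N\le s_{i+1}=s_i+a_{i+1}$, if $N\le t_i-1$ then $N$ is represented by $G_i$ (take $x_{i+1}=0$). Otherwise $t_i\le N\le s_{i+1}$; here I set $x_{i+1}=1$ and represent the remainder $N-a_{i+1}$ by $G_i$, which is legitimate because $0\le N-a_{i+1}\le s_i\le t_i-1$ (the lower bound uses $a_{i+1}\le t_i\le N$, the upper bound uses $N\le s_{i+1}$) and $G_i$ represents all of $[0,t_i-1]$. These two cases exhaust $[1,s_{i+1}]$ and close the induction.

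The only genuine obstacle is the range $s_i<N<a_{i+1}$, which is nonempty precisely when $a_{i+1}>s_i+1$; by (\ref{coe}) this can occur only once $s_i\ge m-4$, exactly where the naive ``subset-sum'' reasoning (all $P_m(x_j)\in\{0,1\}$) breaks down, since the subset sums of $\{a_1,\dots,a_{i+1}\}$ then leave a gap. The inequality $a_{i+1}\le t_i$ is what rescues this range: every such $N$ is still strictly below the parent's truant and is therefore already represented by $G_i$. Thus the heart of the proof is not a computation but the structural bound relating a child's new coefficient to its parent's truant, which I expect to be the step worth isolating carefully.
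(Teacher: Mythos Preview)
Your proof is correct and follows essentially the same route as the paper's: induction on the rank, with the key structural bound $a_{i+1}\le t_i$ (the new coefficient does not exceed the parent's truant) used to guarantee that $N-a_{i+1}$ falls back into the range already covered by $G_i$. The paper frames the inductive step as a contradiction argument rather than your direct case split, and it cites $a_k\le t_{k-1}$ as a definitional fact of the tree rather than deriving it from the representation of $t_i$ as you do, but these are cosmetic differences; the substance is the same.
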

\begin{proof} 
The proof proceeds by induction on  the rank $k$ of node.

When $k=1$, it is clear because $a_1=1$.

And now assume that the Proposition is true for all nodes of $\rank \ k-1$, that is, any node $\sum_{i=1}^{k-1}a_iP_m(x_i)$ of $\rank \  k-1$ represents every positive integer up to $\sum _{i=1}^{k-1}a_i$.
For a node $\sum _{i=1}^ka_iP_m(x_i)$ of the $\rank \ k$ to obtain a contradiction, assume that there is an integer $\alpha \le \sum _{i=1}^ka_i$ which is not represented by the node $\sum _{i=1}^ka_iP_m(x_i)$.
Since the truant of $\sum_{i=1}^{k-1}a_iP_m(x_i)$ is less than the truant of $\sum_{i=1}^{k}a_iP_m(x_i)$ (which is less than or equal to $\alpha$), we may get that $a_k \le \alpha$ because $a_k$ must be less than or equal to the truant of $\sum_{i=1}^{k-1}a_iP_m(x_i)$.
Since $0 \le \alpha-a_k \le \sum_{i=1}^{k-1}a_i$, by the induction hypothesis, $\alpha - a_k$ may be represented by $\sum _{i=1}^{k-1}a_iP_m(x_i)$.
Therefore $$\alpha=(\alpha - a_k)+a_k$$ may be represented by $\sum _{i=1}^ka_iP_m(x_i)$ by taking $x_k=1$, which is a contradiction.
This completes the proof.
\end{proof}

\vskip 0.5em

\begin{rmk} \label{key}
We may obtain that a node $$\sum \limits_{i=1}^ka_iP_m(x_i)$$ with $C(m-2) \le a_1+\cdots +a_k$ would be a leaf, i.e., a proper universal $m$-gonal form by Theorem \ref{C} and Proposition \ref{node}
\end{rmk}

\vskip 0.5em

\begin{lem} \label{2}
For $m>6C^2(C+1)$, a leaf $F_m(\mathbf x)=\sum \limits_{i=1}^na_iP_m(x_i)$ with $a_{l_m} \ge C+1$ where $l_m:=\floor {\frac{m-2}{C+1}}$ has the $\rank  \ n \le \left(1-\frac{1}{(C+1)^2} \right)(m-2)+\frac{C+2}{C+1}$.
\end{lem}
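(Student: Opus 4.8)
We are given a leaf $F_m(\mathbf x)=\sum_{i=1}^n a_iP_m(x_i)$, whose coefficients are nondecreasing, with the property that the $l_m$-th coefficient already satisfies $a_{l_m}\ge C+1$, where $l_m=\floor{\frac{m-2}{C+1}}$. We must bound the rank $n$ from above by $\left(1-\frac{1}{(C+1)^2}\right)(m-2)+\frac{C+2}{C+1}$. The plan is to split the coefficient sequence at the index $l_m$ and estimate how much each block contributes to the total coefficient sum, then use the fact (from Remark \ref{key}) that as soon as the partial sum of coefficients reaches $C(m-2)$ the node is a leaf, which caps the total rank.

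**The main mechanism.** First I would observe that by monotonicity all coefficients from index $l_m$ onward are at least $C+1$, so the tail $\sum_{i=l_m}^n a_i \ge (n-l_m+1)(C+1)$. On the other hand, the escalation condition (\ref{coe})/(\ref{2^i}) is no longer the binding constraint this far out; instead the relevant control is that a leaf cannot overshoot: once $a_1+\cdots+a_i\ge C(m-2)$ the node is already universal by Remark \ref{key}, so $n$ is forced to be small. The key quantitative step is to bound the total coefficient sum $\sum_{i=1}^n a_i$ from above by roughly $C(m-2)$ (plus the size of the last coefficient, which by Proposition \ref{node} and the truant condition is at most the partial sum of all previous coefficients plus one, hence at most about $C(m-2)$). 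Combining the lower bound $(n-l_m+1)(C+1)\le \sum_{i=l_m}^n a_i \le \sum_{i=1}^n a_i$ with the upper bound on the total sum, and then solving the resulting linear inequality for $n$, should yield the stated bound.

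**Carrying out the arithmetic.** Concretely, I would argue: since the parent $\sum_{i=1}^{n-1}a_iP_m(x_i)$ is not universal, its coefficient sum satisfies $\sum_{i=1}^{n-1}a_i < C(m-2)$ (otherwise Remark \ref{key} would already make it a leaf). The last coefficient $a_n$ is at most (truant of the parent) $\le \sum_{i=1}^{n-1}a_i+1 \le C(m-2)$, but more usefully I only need the tail bound. From $(n-l_m+1)(C+1)\le \sum_{i=l_m}^{n}a_i \le \sum_{i=1}^{n-1}a_i + a_n$ and $\sum_{i=1}^{n-1}a_i<C(m-2)$, together with the crude estimate $l_m \ge \frac{m-2}{C+1}-1$, I would isolate $n$. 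After rearranging, $n \le l_m - 1 + \frac{1}{C+1}\left(\sum_{i=l_m}^{n}a_i\right)$, and bounding the remaining sum by $C(m-2)$-type quantities, the terms $\frac{m-2}{C+1}$ and $\frac{C(m-2)}{C+1}$ combine into $\left(1-\frac{1}{(C+1)^2}\right)(m-2)$, with the constant $\frac{C+2}{C+1}$ absorbing the floor corrections and the ``$+1$'' truant slack.

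**Anticipated obstacle.** The delicate point will be tracking the constant-order error terms cleanly: the floor in $l_m=\floor{\frac{m-2}{C+1}}$, the off-by-one from the truant inequality $a_n\le 1+\sum_{i<n}a_i$, and the boundary between the parent's sum being $<C(m-2)$ versus the leaf's sum. These must be bookkept so that the leftover constant is exactly $\frac{C+2}{C+1}$ rather than something larger; the hypothesis $m>6C^2(C+1)$ is presumably what guarantees $l_m$ is large enough (and the block from index $l_m$ onward genuinely exists and has all coefficients $\ge C+1$) so that these corrections do not swamp the main linear term. I would verify at the end that under $m>6C^2(C+1)$ the index $l_m\le n$, so that the hypothesis $a_{l_m}\ge C+1$ is meaningfully used and the tail block is nonempty.
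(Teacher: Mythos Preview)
Your approach is essentially the paper's: bound the parent's coefficient sum from below using $a_i\ge 1$ for $i\le l_m$ and $a_i\ge C+1$ for $i>l_m$, then invoke Remark~\ref{key} to force $\sum_{i=1}^{n-1}a_i<C(m-2)$ and solve for $n$. Two remarks on execution: first, there is no need to involve $a_n$ or the truant inequality at all---the paper works entirely with the parent sum $\sum_{i=1}^{n-1}a_i$, which already yields the contradiction; second, your claimed combination ``$\tfrac{m-2}{C+1}$ and $\tfrac{C(m-2)}{C+1}$ combine into $(1-\tfrac{1}{(C+1)^2})(m-2)$'' is arithmetically off (those sum to $m-2$); the correct factor appears once you also use the head estimate $\sum_{i\le l_m}a_i\ge l_m>\tfrac{m-2}{C+1}-1$, which produces the second-order term $\tfrac{m-2}{(C+1)^2}$ and the constant $\tfrac{C+2}{C+1}$.
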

\begin{proof}
To obtain a contradiction assume that $n >(1-\frac{1}{(C+1)^2})(m-2)+\frac{C+2}{C+1}$.  
Then we may get 
\begin{align*}\sum \limits_{i=1}^{n-1}a_i & =\sum \limits_{i=1}^{l_m}a_i+\sum \limits_{i={l_m}+1}^{n-1}a_i \\
&>\left (\frac{m-2}{C+1}-1\right)+(C+1)\left(n-1-\frac{m-2}{C+1}\right)>C(m-2)\end{align*}
which yields that the parent $\sum \limits_{i=1}^{n-1}a_iP_m(x_i)$ of $F_m(\mathbf x)$ is already universal by Remark \ref{key}, which is a contradiction.
Consequently, we may conclude that $n \le (1-\frac{1}{(C+1)^2})(m-2)+\frac{C+2}{C+1}$.
\end{proof}

\vskip 0.5em

\begin{rmk}
Now we consider a node $\sum \limits_{i=1}^na_iP_m(x_i)$ with $0 \neq a_{l_m} < C+1$ where $l_m:=\floor {\frac{m-2}{C+1}}$.
One may easily see that there appear 5 consecutively same coefficients between $C$-th coefficient $a_C$ and $5C$-th coefficient $a_{5C}$, i.e., there is $C\le t \le 5C-4$ for which $$a_t=a_{t+1}=\cdots =a_{t+4}$$
because there are $4C+1$ components between $a_CP_m(x_C)$ and $a_{5C}P_m(x_{5C})$ and $1\le a_C \le a_{C+1} \le \cdots \le a_{5C} \le a_{l_m}<C+1$. 
\end{rmk}

\vskip 0.5em

\begin{lem} \label{A(m-2)}
For $A\in \mathbb N$, the $m$-gonal form $$\sum \limits_{i=1}^5A\cdot P_m(x_i)$$ represents all the multiples of $A(m-2)$.
\end{lem}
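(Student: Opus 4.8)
The plan is to factor out $A$ and reduce the lemma to a clean Diophantine statement. Since $\sum_{i=1}^5 A\cdot P_m(x_i)=A\left(\sum_{i=1}^5 P_m(x_i)\right)$, it suffices to show that $\sum_{i=1}^5 P_m(x_i)$ represents $(m-2)t$ for every integer $t\ge 0$. The main tool is the identity
\[
P_m(x)=(m-2)P_3(x-1)+x,
\]
which is immediate from $P_3(x-1)=\tfrac{x^2-x}{2}$. Summing over the five variables gives $\sum_{i=1}^5 P_m(x_i)=(m-2)\sum_{i=1}^5 P_3(x_i-1)+\sum_{i=1}^5 x_i$.

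First I would look for a representation in which $\sum_{i=1}^5 x_i=0$. Under this constraint the linear term vanishes, and since $\sum_{i=1}^5 P_3(x_i-1)=\tfrac12\left(\sum x_i^2-\sum x_i\right)=\tfrac12\sum x_i^2$, the identity collapses to $\sum_{i=1}^5 P_m(x_i)=(m-2)\cdot\tfrac12\sum x_i^2$. Hence the whole lemma reduces to the purely elementary claim that for every $t\ge 0$ there exist integers $x_1,\dots,x_5$ with
\[
\sum_{i=1}^5 x_i=0\qquad\text{and}\qquad \sum_{i=1}^5 x_i^2=2t .
\]
Any such tuple yields $\sum_{i=1}^5 A\,P_m(x_i)=A(m-2)t$, and $t=0$ is handled by the zero tuple; since the form is nonnegative, these are exactly the multiples of $A(m-2)$ that can occur.

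The remaining claim is precisely the assertion that the quaternary form $Q(x_1,x_2,x_3,x_4)=x_1^2+x_2^2+x_3^2+x_4^2+(x_1+x_2+x_3+x_4)^2$, obtained by eliminating $x_5=-(x_1+\cdots+x_4)$, represents every positive even integer. One checks at once that $Q$ takes only even values, so this is the best possible; $Q$ is the norm form of the root lattice $A_4$. I would finish by producing such representations via Gauss's three-square theorem: the choice $x_1=a,\ x_2=-a$ absorbs any square $a^2$ while contributing $0$ to the linear sum, after which the three-square theorem, combined with the reflection $P_3(y)=P_3(-1-y)$ (i.e. sign changes on odd square-representatives), is used to meet both the target value $2(t-a^2)$ and the vanishing of the total linear sum.

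I expect the genuine difficulty to lie exactly in this last step, namely in realizing three squares with a \emph{prescribed} linear sum. The $A_2$-type obstruction (primes $\equiv 2\pmod 3$) shows that no single rigid template such as ``square plus $x^2+xy+y^2$'' can succeed for all $t$: already $t=6$ defeats every such pairing and must instead be realized by the genuinely four-dimensional tuple $(3,-1,-1,-1,0)$. The cleanest rigorous route is therefore to invoke that $A_4$ is unique in its genus and that $Q$ represents every even integer over each $\mathbb{Z}_p$ (automatic for odd $p$, since $Q$ has rank $\ge 3$, plus a short check at $p=2$), whence $Q$ represents all positive even integers globally; alternatively one argues directly by casework on the three-square theorem. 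Either way, once this quaternary universality over the even integers is established, the reduction above completes the proof.
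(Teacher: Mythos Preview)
Your reduction is correct: factoring out $A$ and imposing $\sum_i x_i=0$ collapses the problem to showing that the root lattice $A_4$ represents every positive even integer, and the genus-theoretic finish you outline (class number one for $A_4$, together with the local checks at each prime) is valid. Your observation that the naive ``pair off two variables and use three squares on the rest'' template genuinely fails at $t=6$ is also correct and justifies the pivot to the lattice argument rather than a direct three-square construction.

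The paper itself gives no proof here; it simply cites Lemma~2.2 of \cite{KP}. Your argument is therefore an independent route rather than a reconstruction of the paper's. The approach you take is conceptually clean and uniform in $m$, at the cost of invoking the class-number-one fact for $A_4$ and the attendant local analysis; an alternative more in the spirit of the surrounding arguments in this paper would be to exhibit explicit finite families of $5$-tuples covering all $t$ by cases, trading machinery for bookkeeping. Either way the lemma follows.
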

\begin{proof}
See Lemma 2.2 in \cite{KP}.
\end{proof}

\vskip 0.5em

\begin{prop}\label{b}
For $m \ge 6C^2(C+1)$, let $F_m(\mathbf x)=\sum \limits_{i=1}^na_iP_m(x_i)$ be a leaf with $0 \not= a_{l_m} < C+1$ where $l_m:=\floor {\frac{m-2}{C+1}}$.
If there is $C \le t \le 5C-4$ for which $a_t=a_{t+1}=\cdots=a_{t+4}$, then we rearrange the coefficients of $F_m(\mathbf x)$ except the 5 consecutive coefficients $a_t,\cdots,a_{t+4}$ as follows
$$b_i:=\begin{cases}a_i & \text{ when } i<t\\ a_{i+5} & \text{ when } i\ge t.
\end{cases}$$
And then we have the followings.

\begin{itemize}
\item[(1)] For $i \le l_m$, the inequality $b_i\le b_1+\cdots +b_{i-1}+1$ always holds.
\item[(2)] If there is $l_m < i \le C\cdot l_m$ such that $b_i> b_1+\cdots +b_{i-1}+1$, then $n < m-4$.
\end{itemize}
\end{prop}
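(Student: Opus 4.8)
The plan is to translate every statement about the rearranged sequence $(b_i)$ back into the original sequence $(a_i)$ through partial sums. Write $S_k=a_1+\cdots+a_k$ and $T_k=b_1+\cdots+b_k$. From the definition of $b_i$ one reads off the dictionary
\begin{equation*}
T_{i-1}=\begin{cases} S_{i-1} & \text{if } i\le t,\\ S_{i+4}-5a_t & \text{if } i> t,\end{cases}
\end{equation*}
the second line because the removed block $a_t,\dots,a_{t+4}$ contributes exactly $5a_t$ to $S_{i+4}$. Throughout I would use three inputs: the escalation inequality $a_{k+1}\le S_k+1$, valid whenever $S_k<m-4$, coming from (\ref{3.1}); the monotonicity $a_1\le\cdots\le a_n$; and the leaf bound $S_{n-1}<C(m-2)$, which holds because the parent $\sum_{i=1}^{n-1}a_iP_m(x_i)$ is not universal and hence cannot meet the hypothesis of Remark \ref{key}. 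Since each $b_j\ge 1$ I also have the crude bound $T_{i-1}\ge i-1$.

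For part (1) I would split on the position of $i$ relative to $t$. When $i<t$ we have $b_i=a_i$ and $T_{i-1}=S_{i-1}$, so the desired $b_i\le T_{i-1}+1$ is literally the escalation inequality for $a$; this is legitimate because $i-1<t\le 5C-4$ together with $a_j\le a_{l_m}\le C$ forces $S_{i-1}\le(5C)C<m-4$. For $t\le i\le l_m$ I argue by contradiction: a violation $b_i>T_{i-1}+1$ gives $a_{i+5}=b_i>T_{i-1}+1\ge i\ge t\ge C\ge a_{l_m}$, so $a_{i+5}>a_{l_m}$ and monotonicity forces $i+5>l_m$, i.e. $i\ge l_m-4$. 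Thus no violation is possible unless $i$ is one of the five indices $l_m-4,\dots,l_m$, and these are the only cases that require real work.

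The main obstacle is exactly this boundary. Here $a_{i+5}$ may genuinely exceed $C$, so the crude estimate is unavailable and one must show $a_{i+5}\le T_{i-1}+1=S_{i+4}-5a_t+1$ directly. The escalation inequality only yields the weaker $a_{i+5}\le S_{i+4}+1$, leaving a gap of $5a_t\le 5C$, and closing this gap is the crux. My intended route is to combine the block relation $S_{i+4}-5a_t\ge S_{i-1}$ (each of $a_i,\dots,a_{i+4}$ is at least $a_t$) with the fact that a near-maximal jump $a_{i+5}\approx S_{i+4}+1$ at a position $i+5$ just beyond $l_m$ is incompatible with the leaf structure: such a jump makes $\sum_{j\le i+4}a_jP_m(x_j)$ represent $[0,S_{i+4}]$ by Proposition \ref{node} while forcing all later coefficients to be large, and one then tracks how these push $S_{n-1}$ past $C(m-2)$, contradicting the leaf bound. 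I expect this to be the most delicate point and the place where $m>6C^2(C+1)$ is genuinely needed.

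For part (2) the counting is clean. Suppose a violation occurs at some $l_m<i\le C\,l_m$; since $i>l_m\ge t$ we have $b_i=a_{i+5}$, and the violation together with $T_{i-1}\ge i-1$ gives $a_{i+5}>T_{i-1}+1\ge i$. By monotonicity every coefficient $a_j$ with $j\ge i+5$ then exceeds $i$. If $n\le i+5$ we are done, because $i+5\le C\,l_m+5<m-4$ for $m>6C^2(C+1)$. Otherwise $a_{i+5},\dots,a_{n-1}$ are all $>i$, so
\begin{equation*}
(n-i-5)\,i<\sum_{j=i+5}^{n-1}a_j\le S_{n-1}<C(m-2).
\end{equation*}
Dividing by $i>l_m\ge \tfrac{m-2}{C+1}-1$ bounds $n-i-5$ by a constant of size $O\!\left(C(C+1)\right)$, whence $n<C\,l_m+5+O\!\left(C(C+1)\right)$. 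A final comparison with $m-4$, using $C\,l_m\le\tfrac{C(m-2)}{C+1}$ and $m>6C^2(C+1)$, yields $n<m-4$, as claimed.
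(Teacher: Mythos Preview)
Your part~(2) and the non-boundary portion of part~(1) match the paper's argument: for $i<t$ both of you invoke the escalation inequality, and for $t\le i\le l_m$ the paper, like you, derives from a hypothetical violation that $b_i>C\ge a_{l_m}$. The paper then simply writes $b_i\le a_{l_m}$ and declares a contradiction, an assertion that is only justified when $i+5\le l_m$; so the paper glosses over precisely the five boundary indices you single out.

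Your proposed repair of this boundary cannot succeed, however, because statement~(1) is in fact \emph{false} there. Take $a_1=\cdots=a_{l_m}=1$ and $a_{l_m+1}=l_m+1$ (the truant of $l_m$ ones is $l_m+1$, so this is a legitimate escalation step), and extend to a leaf. Then $a_{l_m}=1\le C$ and a block of five equal coefficients is present, yet at $i=l_m-4$ one has $b_i=a_{l_m+1}=l_m+1$ while $T_{i-1}=S_{l_m}-5a_t=l_m-5$, so $b_i>T_{i-1}+1$. Your mechanism of pushing $S_{n-1}$ past $C(m-2)$ only fires after assuming $n$ is large; without that assumption the leaf just constructed has $S_{n-1}<C(m-2)$ and still violates the inequality. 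The correct resolution is to absorb the boundary into~(2): for any $i\ge l_m-4$ a violation still gives $a_{i+5}>T_{i-1}+1\ge i\ge l_m-4$, and then your own counting in~(2) (or the paper's version, which assumes $n\ge m-4$ for contradiction and derives $S_{n-1}>C(m-2)$) forces $n<m-4$. This weakened dichotomy, ``either $b_i\le T_{i-1}+1$ for all $i\le C\,l_m$ or $n<m-4$'', is all that the later applications (Lemma~\ref{13} and Lemma~\ref{k(A)+1}) actually use.
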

\begin{proof}
(1) For $i<t$, if $C \le b_1+\cdots +b_{i-1}$, then since $b_i=a_i \le a_{l_m} \le C$ from the assumption, we may get that  $b_{i}\le b_1+\cdots +b_{i-1}+1$. If $a_1+\cdots +a_{i-1}=b_1+\cdots +b_{i-1} <C<m-4$, then the truant of the node $a_1P_m(x_1)+\cdots +a_{i-1}P_m(x_{i-1})$ would be $a_1+\cdots +a_{i-1}+1(<m-3)$.
So we obtain that $b_i=a_i \le a_1+\cdots +a_{i-1}+1=b_1+\cdots +b_{i-1}+1$.

If there is $t\le i \le l_m$ such that $b_{i}>b_1+\cdots +b_{i-1}+1$,
then we may get that $$C \le t \le i=(i-1)+1 \le b_1+\cdots+b_{i-1}+1 < b_i \le a_{l_m}.$$ 
This is a contradiction to $a_{l_m} \le C$.
This yields the claim.

(2) To obtain a contradiction, assume that $n \ge m-4$.
If there is $l_m\left(=\floor{\frac{m-2}{C+1}}\right)<i\le C\cdot l_m$ such that $b_{i}>b_1+\cdots +b_{i-1}+1$, then we may have that $$a_{i+5}>a_1+\cdots +a_{i+4}+1-5A \ge i +5-5A > \frac{m-2}{C+1}-5A \ge (C+1)^2.$$
And then from 
\begin{align*}\sum \limits_{i=1}^{n-1}a_i & =\sum \limits_{i=1}^{C\cdot l_m+4}a_i+\sum \limits_{i=C\cdot l_m+5}^{n-1}a_i\\  &\ge (C\cdot l_m +4)+(C+1)^2(n-1-C\cdot l_m-4)\\
 &\ge (C\cdot l_m +4)+(C+1)^2(m-9-C\cdot l_m)>C(m-2),\end{align*}
we may obtain a contradiction that the parent $\sum_{i=1}^{n-1}a_iP_m(x_i)$ of $F_m(\mathbf x)$ is already universal by Remark \ref{key}.
Consequently, we may conclude that $n < m-4$.
\end{proof}

\vskip 0.5em

\begin{rmk}
In Lemma \ref{2} and Proposition \ref{b}, we showed that the $\rank$ of leaf under some conditions does not exceed $m-4$.
Actually, what the conditions have in common is that its escalating is not happening that slowly.
Under such the conditions, we reach to leaf node, i.e., a universal $m$-gonal form before $m-4$ escalating steps.
\end{rmk}

\begin{lem} \label{13}
For $m>6C^2(C+1)$, let $F_m(\mathbf x)=\sum \limits_{i=1}^na_iP_m(x_i)$ be a leaf with $0 \not= a_{l_m} < C+1$ where $l_m:=\floor {\frac{m-2}{C+1}}$.
Then $$(a_1,a_2)=(1,1)\text{ or } (a_1,a_2,a_3) \in \{(1,2,2),(1,2,3),(1,2,4)\}$$ 
and there is $C \le t \le 5C-4$ such that $$a_t=a_{t+1}=\cdots =a_{t+4}=:A.$$
\begin{itemize}
\item[(1)] When $(a_1,a_2)=(1,1)$, if there is above $C \le t \le 5C-4$ for which $a_t=\cdots=a_{t+4}=A>6$, then $n < m-4$.
\item[(2)] When $(a_1,a_2,a_3)=(1,2,2)$, if there is above $C \le t \le 5C-4$ for which $a_t=\cdots=a_{t+4}=A>12$, then $n < m-4$.
\item[(3)] When $(a_1,a_2,a_3)=(1,2,3)$, if there is above $C \le t \le 5C-4$ for which $a_t=\cdots=a_{t+4}=A>12$, then $n < m-4$.
\item[(4)] When $(a_1,a_2,a_3)=(1,2,4)$, $n <m-4$.
\end{itemize}
\end{lem}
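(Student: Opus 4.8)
The plan is to separate the combinatorial set-up (the shape of the leading coefficients and the existence of the five equal coefficients) from the four representability bounds, and then to reduce each bound to producing a single ``jump'' that Proposition~\ref{b}(2) converts into $n<m-4$.

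First I would read the leading coefficients directly off the escalator conditions~(\ref{3.1}). Since $a_1=1$ and $a_1<m-4$ for large $m$, we get $a_2\le a_1+1=2$, so $a_2\in\{1,2\}$; and if $a_2=2$ then $2=a_2\le a_3\le a_1+a_2+1=4$, so $a_3\in\{2,3,4\}$. This produces exactly the configurations $(a_1,a_2)=(1,1)$ and $(a_1,a_2,a_3)\in\{(1,2,2),(1,2,3),(1,2,4)\}$. For the five equal coefficients I would use $a_{l_m}<C+1$: the $4C+1$ nondecreasing integers $a_C\le a_{C+1}\le\cdots\le a_{5C}$ all lie in $\{1,\dots,C\}$, so by pigeonhole some value occurs at least five times consecutively, giving $C\le t\le 5C-4$ with $a_t=\cdots=a_{t+4}=:A$. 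This part is routine.

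For the four bounds I would argue by contradiction, assuming $n\ge m-4$. Let $G_m=\sum b_iP_m$ be the form obtained by deleting the five-term block, so that $F_m=G_m+A\sum_{j=t}^{t+4}P_m(x_j)$. By Lemma~\ref{A(m-2)} the block represents every multiple of $A(m-2)$, and since every $m$-gonal number is nonnegative, for $N\in[1,A(m-2)-1]$ the block can only contribute $0$; hence the leaf (universality) hypothesis forces $G_m$ to represent the whole interval $[1,A(m-2)-1]$. Using $P_m(x)=(m-2)P_3(x-1)+x$, a target $N=j(m-2)+r$ with $0\le j\le A-1$ and $0\le r\le m-3$ is represented by $G_m$ precisely when the variables split into \emph{jumpers} (those with $b_iP_3(x_i-1)\ge1$, contributing $b_iP_3(x_i-1)$ to $j$ and $b_ix_i$ to the linear part) and \emph{fillers} $x_i\in\{0,1\}$ (contributing a subset sum of the $b_i$ to the linear part $r$). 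The mechanism driving the contradiction is that the coefficients up to index $l_m$ are all $\le C$, so their subset sum is at most $C\cdot l_m\le\frac{C(m-2)}{C+1}$, which is below $m-3$ for large $m$; thus residues $r$ near $m-3$ can only be filled using coefficients past index $l_m$, while the large jumps $j$ near $A-1$ simultaneously consume coefficients.

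The heart of the proof, and the step I expect to be hardest, is the configuration-dependent accounting showing that when $A$ exceeds the stated threshold this split cannot be realized for every target without forcing some coefficient beyond index $l_m$ to exceed its running partial sum by more than $1$, i.e.\ a jump $b_i>b_1+\cdots+b_{i-1}+1$ at some $l_m<i\le C\cdot l_m$. Concretely, I would exhibit a specific target (of the form $k(m-2)-1$ or the extreme $A(m-2)-1$) whose representation is impossible unless a new oversized coefficient appears, the threshold being governed by the leading subset sum ($1+1=2$ for $(1,1)$, giving $A>6$; $1+2+2=5$ and $1+2+3=6$ for the two middle cases, giving $A>12$). The configuration $(1,2,4)$ needs no threshold because the leading ternary $P_m(x_1)+2P_m(x_2)+4P_m(x_3)$ carries the universal ternary triangular form $P_3(x)+2P_3(y)+4P_3(z)$ already exploited in Lemma~\ref{rm1}, which rigidly fixes the jump/filler bookkeeping for every $A$ and forces the jump directly. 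Once such a jump is produced, Proposition~\ref{b}(2) immediately yields $n<m-4$, contradicting $n\ge m-4$ and closing each case. I expect the genuine difficulty to lie in the explicit verification producing this forced jump and in pinning down the sharp constants $6$ and $12$, together with the finite list of boundary targets to be checked by hand, in the spirit of the computations~(\ref{pos})--(\ref{neg}) of Lemma~\ref{rm1}.
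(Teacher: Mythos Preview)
Your set-up for the leading coefficients and the pigeonhole block is fine, and you are right that Proposition~\ref{b} disposes of the case where some $b_i$ with $l_m<i\le C\cdot l_m$ jumps past its partial sum. The gap is in how you handle the complementary case.

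Your key inference, that ``for $N\in[1,A(m-2)-1]$ the block can only contribute $0$, hence $G_m$ must represent all of $[1,A(m-2)-1]$'', is false. The block $A\sum_{j=t}^{t+4}P_m(x_j)$ takes the values $0,A,2A,\dots,5A,A(m-3),A(m-3)+A,\dots$, many of which lie in $[1,A(m-2)-1]$; universality of the leaf $F_m$ therefore does not force $G_m$ to represent that whole interval. Once that implication fails, your mechanism for producing a forced jump collapses, and with it the appeal to Proposition~\ref{b}(2).

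The paper runs the logic in the opposite direction. After invoking Proposition~\ref{b} it \emph{assumes} $b_i\le b_1+\cdots+b_{i-1}+1$ for all $i\le C\cdot l_m$, supposes $n>C\cdot l_m+5$, and then shows directly that the proper subform $\sum_{i=1}^{C\cdot l_m+5}a_iP_m(x_i)$ is already universal, contradicting the leaf property. Concretely, the no-jump hypothesis gives $\sum_{i\le C\cdot l_m}b_i>(A-1)(m-2)+4$, so $\{0,1\}$-values cover $[1,(A-1)(m-2)+4]$; the remaining sliver up to $A(m-2)$ is filled using explicit identities in the first two or three variables, e.g.\ $P_m(4)+P_m(0)=6(m-2)+4$, $P_m(4)+P_m(1)=6(m-2)+5$, $P_m(3)+P_m(3)=6(m-2)+6$ for $(a_1,a_2)=(1,1)$, and analogous lists near $12(m-2)$ for $(1,2,2)$, $(1,2,3)$ and near $3(m-2)$ for $(1,2,4)$. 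The thresholds $6$ and $12$ are exactly the coefficients of $(m-2)$ in these identities, not the subset sums $1+1$, $1+2+2$, $1+2+3$ you propose; your heuristic for the constants does not match what actually governs them.
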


\begin{proof}
Since the coefficients of $F_m(\mathbf x)$ follows the conditions (\ref{coe}), we may have that $$(a_1,a_2)=(1,1),\text{ or } (a_1,a_2,a_3) \in \{(1,2,2),(1,2,3),(1,2,4)\}.$$ 
We prove this lemma only for the case $(a_1,a_2)=(1,1)$ and omit the proof of (2), (3), and (4) in this paper. 
One may prove those through similar arguments with the proof of (1).

Under same notation as in Proposition \ref{b}, in virtue of Proposition \ref{b}, we could assume that 
\begin{equation}\label{bas}b_i \le b_1+\cdots +b_{i-1}+1\end{equation} for all $1\le i \le \min \{n-5, C\cdot l_m \}$.
Under the assumption (\ref{bas}), we prove that $n \le C\cdot l_m+5 \left( \approx \frac{C}{C+1}m\right)<m-4$.
To obtain a contradiction, assume that $n >C\cdot l_m+5$.
From \begin{align*}\sum \limits_{i=1}^{C\cdot l_m}b_i=\sum \limits_{i=1}^{5C}b_i+\sum \limits_{i=5C+1}^{C\cdot l_m}b_i &\ge  5C+A(C\cdot l_m-5C)  \\
& \ge 5C+AC(\frac{m-2}{C+1}-1)-5AC  \\
& > (A-1)(m-2)+4
\end{align*}
with (\ref{bas}), we may have that $$\sum _{i=1}^{C\cdot l_m}b_iP_m(x_i)$$ represents every positive integer up to $(A-1)(m-2)+4$ by taking $x_i \in \{0,1\}$ for all $i$.
For integer $N \in [(A-1)(m-2)+5,A(m-2)]$, since $$0 <N-\{(6(m-2)+4\} < (A-1)(m-2)$$ holds, $N-\{(6(m-2)+4\} $ may be written as 
\begin{equation}\label{11'}N-\{6(m-2)+4\}=\sum \limits_{i=1}^{C\cdot l_m}b_iP_m(N(x_i))\end{equation} for some $(N(x_i))_i \in \{0,1\} ^{C\cdot l_m}$ by the above argument.
Up to reordering, we may assume that $(N(x_1),N(x_2))=(0,0), (1,0),$ or $(1,1)$. 
Note that 
\begin{equation}\label{11}\begin{cases} P_m(4)+P_m(0)=6(m-2)+4 \\ P_m(4)+P_m(1)=6(m-2)+5 \\  P_m(3)+P_m(3)=6(m-2)+6. \end{cases}\end{equation}
And then with (\ref{11'}) and (\ref{11}), we may see that the integer $N \in [(A-1)(m-2)+5,A(m-2)]$ is written as follows

$$\begin{cases}
N=P_m(4)+P_m(0)+\sum \limits_{i=3}^{C\cdot l_m}b_iP_m(N(x_i)) & \text{ when }(N(x_1),N(x_2))=(0,0) \\
N=P_m(4)+P_m(1)+\sum \limits_{i=3}^{C\cdot l_m}b_iP_m(N(x_i)) & \text{ when }(N(x_1),N(x_2))=(1,0) \\
N=P_m(3)+P_m(3)+\sum \limits_{i=3}^{C\cdot l_m}b_iP_m(N(x_i)) & \text{ when }(N(x_1),N(x_2))=(1,1). \\
\end{cases}$$
As a result, we may obtain that $\sum_{i=1}^{C\cdot l_m}b_iP_m(x_i)$ represents every positive integer up to $A(m-2)$.
On the other hand, by Lemma \ref{A(m-2)}, all the multiples of $A(m-2)$ may be represented by $$\sum_{i=t}^{t+4}a_iP_m(x_i).$$ 
Consequently, we may conclude that $$\sum_{i=1}^{C\cdot l_m+5}a_iP_m(x_i)=\sum_{i=1}^{C\cdot l_m}b_iP_m(x_i)+\sum_{i=t}^{t+4}a_iP_m(x_i)$$
represents every positive integer, i.e., the parent of $F_m(\mathbf x)$ is already universal, yielding a contradiction to $F_m(\mathbf x)$ is a leaf.
This completes the proof of (1).

By using the below equations instead of (\ref{11}), one may show (2), (3), and (4) through similar arguments with the above.

(2) When $(a_1,a_2,a_3)=(1,2,2),$ one may use following equations\\
\begin{equation}\label{122}
\begin{cases}
P_m(0)+2P_m(-2)+2P_m(-2)=12m-32\\
P_m(-3)+2P_m(-2)+2P_m(0)=12m-31\\
P_m(-4)+2P_m(-1)+2P_m(0)=12m-30\\
P_m(-3)+2P_m(-2)+2P_m(1)=12m-29\\
P_m(0)+2P_m(-3)+2P_m(1)=12m-28\\
P_m(1)+2P_m(-3)+2P_m(1)=12m-27.\\
\end{cases}
\end{equation}

(3) When $(a_1,a_2,a_3)=(1,2,3), $ one may use following equations\\
\begin{equation}\label{123}
\begin{cases}
P_m(-2)+2P_m(-2)+3P_m(-1)=12m-33\\
P_m(-2)+2P_m(0)+3P_m(-2)=12m-32\\
P_m(-3)+2P_m(-2)+3P_m(0)=12m-31\\
P_m(0)+2P_m(-3)+3P_m(0)=12m-30\\
P_m(1)+2P_m(-3)+3P_m(0)=12m-29\\
P_m(3)+2P_m(-2)+3P_m(-1)=12m-28\\
P_m(3)+2P_m(0)+3P_m(-2)=12m-27.\\
\end{cases}
\end{equation}

(4) When $(a_1,a_2,a_3)=(1,2,4), $ one may use following equations (note that when $a_3=4$, $A\ge 4$)\\
\begin{equation}\label{124}
\begin{cases}
P_m(2)+2P_m(-1)+4P_m(0)=3m-6\\
P_m(-1)+2P_m(-1)+4P_m(1)=3m-5\\
P_m(-2)+2P_m(0)+4P_m(1)=3m-4\\
P_m(3)+2P_m(0)+4P_m(0)=3m-3\\
P_m(-2)+2P_m(1)+4P_m(1)=3m-2\\
P_m(3)+2P_m(1)+4P_m(0)=3m-1\\
P_m(2)+2P_m(2)+4P_m(0)=3m\\
P_m(3)+2P_m(0)+4P_m(1)=3m+1.\\
\end{cases}
\end{equation}
\end{proof}

\vskip 0.5em

\begin{rmk}
In Lemma \ref{13}, we showed that if a leaf $F_m(\mathbf x)=\sum \limits_{i=1}^na_iP_m(x_i)$ with $0 \neq a_{l_m} < C+1$ has the 5 consecutively same coefficients $A$ greater than $12$ between $C$-th component and $5C$-th component, then its $\rank$ could not exceed $m-4$.
Especially under the condition (\ref{bas}), the $\rank$ would be less than or equal to $C\cdot l_m+5 (\approx \frac{C}{C+1}m)$ (more stirict calculations would much reduce the upper bound for $\rank \ n$).

On the other hand, next lemma may help to consider the upper bound for $\rank$ of leaves $F_m(\mathbf x)=\sum \limits_{i=1}^na_iP_m(x_i)$ with $0 \neq a_{l_m} < C+1$ for which every 5 consecutively same coefficients appearing between $C$-th component and $5C$-th component is less than or equal to $12$.
\end{rmk}

\begin{lem} \label{k(A)+1}
For $m>6C^2(C+1)$, let $F_m(\mathbf x)=\sum \limits_{i=1}^na_iP_m(x_i)$ be a leaf with $0 \neq a_{l_m} < C+1$ where $l_m:=\floor {\frac{m-2}{C+1}}$.
Then there is $C \le t \le 5C-4$ such that $$a_t=a_{t+1}=\cdots =a_{t+4}=:A.$$
For $A \ge 2$, let $i(A)$ be the smallest index satisfying $A-1 \le a_1+\cdots+a_{i(A)}$.
\begin{itemize}
\item[(1)]
If $a_{i(A)+1} \le A-1$, then $n < m-4$.
\item[(2)]
If $A+1 \le a_{(C-A-1)l_m+5}$, then $n < m-4$.
\end{itemize}
\end{lem}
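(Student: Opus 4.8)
The plan is to prove both parts by contradiction in the same spirit as Lemma \ref{13}: in each case I assume the leaf has large rank and then exhibit a proper subform of its \emph{parent} that already represents every positive integer up to $C(m-2)$, so that the parent is universal by Theorem \ref{C} together with Proposition \ref{node} (that is, by Remark \ref{key}); this contradicts the hypothesis that $F_m(\mathbf x)$ is a leaf. As in the earlier lemmas I first isolate the five consecutive coefficients $a_t=\cdots=a_{t+4}=A$ guaranteed by the statement, relabel the remaining coefficients as the $b_i$ of Proposition \ref{b}, and invoke Proposition \ref{b}: if the complete-sequence inequality (\ref{bas}) fails for some index up to $C\cdot l_m$, then already $n<m-4$ and there is nothing to prove, so throughout I may assume $b_i\le b_1+\cdots+b_{i-1}+1$ for all $i\le C\cdot l_m$. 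Under (\ref{bas}) the subform $\sum b_iP_m(x_i)$ with each $P_m(x_i)\in\{0,1\}$ represents every integer in $[0,\sum_{i=1}^{C\cdot l_m}b_i]$, while Lemma \ref{A(m-2)} lets the five removed coefficients represent every multiple of $A(m-2)$. Hence in both parts the problem reduces to covering the residual strip just below each multiple of $A(m-2)$, i.e. to showing that $\sum b_iP_m(x_i)$ represents all of $[0,A(m-2)]$.

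For part (1) the estimate already carried out in Lemma \ref{13} gives $\sum_{i=1}^{C\cdot l_m}b_i>(A-1)(m-2)+4$, so the $\{0,1\}$-combinations cover $[0,(A-1)(m-2)+4]$ and only the top strip $\bigl((A-1)(m-2)+4,\,A(m-2)\bigr]$ of length $m-6$ remains. The role of the hypothesis $a_{i(A)+1}\le A-1$ is to preserve the complete-sequence flexibility across the threshold $a_1+\cdots+a_{i(A)}\ge A-1$: since $a_{i(A)+1}\le A-1\le a_1+\cdots+a_{i(A)}$, there is a coefficient of controlled size near index $i(A)$ that I can promote from $P_m\in\{0,1\}$ to a slightly larger argument, exactly as in the bridging identities (\ref{11}), (\ref{122}), (\ref{123}), (\ref{124}) of Lemma \ref{13}, so as to land inside the top strip while the remaining coefficients, still forming a complete sequence, fill the residue below. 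Performing this for each small value $A\ge2$ not already settled by Lemma \ref{13} shows $\sum b_iP_m(x_i)$ represents $[0,A(m-2)]$; combined with the multiples of $A(m-2)$ this makes a subform of $\rank$ at most $C\cdot l_m+5$ universal, and since $m>6C^2(C+1)$ gives $C\cdot l_m+5<m-4$, the parent is universal and $n<m-4$.

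For part (2) the mechanism is different: the hypothesis $A+1\le a_{(C-A-1)l_m+5}$ says a coefficient already reaches size $\ge A+1$ at the moderate index $j_0:=(C-A-1)l_m+5$. I consider the subform $\sum_{i=1}^{j_0}a_iP_m(x_i)$, which still contains the five $A$'s. Using (\ref{bas}) for its complete-sequence part, the multiples of $A(m-2)$ from Lemma \ref{A(m-2)}, and the extra denomination $a_{j_0}\ge A+1$ to bridge the top strip, I show this subform represents every positive integer up to $C(m-2)$ and is therefore universal. Since $l_m=\floor{\frac{m-2}{C+1}}$ and $A\ge2$, for $m>6C^2(C+1)$ one has $j_0<m-4$; hence if $n-1\ge j_0$ the parent contains this universal subform and is universal, a contradiction, while if $n-1<j_0$ then $n\le j_0<m-4$ outright. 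Either way $n<m-4$.

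The main obstacle in both parts is the construction of the bridging representations of the top strip $\bigl((A-1)(m-2),\,A(m-2)\bigr]$ when $A$ is small, since then the clean device of Lemma \ref{13} of subtracting a fixed $6(m-2)+c$ is unavailable and one must instead produce, for each admissible residue class, an explicit representation built from the coefficient supplied by hypothesis (1) or (2); one must also keep every chosen argument $x_i$ bounded by $o(m)$ so that the values stay in the controlled range and the complete-sequence part genuinely fills the remainder. These are finite, case-by-case verifications of the same flavour as the displayed identities (\ref{11})--(\ref{124}), and I expect them to be routine but tedious once the structural reductions above are in place.
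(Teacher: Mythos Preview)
Your overall architecture—assume $n$ large, invoke Proposition \ref{b} to reduce to the complete-sequence inequality (\ref{bas}), cover $[0,A(m-2)]$ with the $b$-part, and glue in multiples of $A(m-2)$ via Lemma \ref{A(m-2)}—matches the paper. But the bridging step in each part diverges from the paper's argument, and in part (2) your version has a real gap.

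\textbf{Part (1).} You propose to keep the $b_i$ (removing only the five $A$'s) and then ``promote'' some coefficient to a slightly larger argument via ad hoc identities in the style of (\ref{11})--(\ref{124}), case by case in $A$. The paper instead removes \emph{six} coefficients, namely the five $A$'s \emph{and} $a_{i(A)+1}$, obtaining a new sequence $c_i$; it checks (as in Proposition \ref{b}) that if the complete-sequence inequality fails for the $c_i$ then already $n<m-4$, and otherwise the $c$-part with $x_i\in\{0,1\}$ covers $[0,(A-1)(m-2)]$. The single separated coefficient $a_{i(A)+1}$ is then used with $x_{i(A)+1}\in\{-1,2\}$, i.e.\ with value $m-3$ or $m$: since $1\le a_{i(A)+1}\le A-1$, for any $N\in[(A-1)(m-2),A(m-2)]$ one of $N-a_{i(A)+1}(m-3)$ or $N-a_{i(A)+1}m$ lands in $(0,(A-1)(m-2))$. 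This handles all $A\ge 2$ uniformly, with no case analysis and no explicit identities; your plan would likely work but is needlessly laborious and misses this clean device.

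\textbf{Part (2).} Here your plan does not work as written. You truncate at $j_0=(C-A-1)l_m+5$ and intend to ``bridge the top strip'' using the single coefficient $a_{j_0}\ge A+1$. But after removing the five $A$'s, the $b$-sum up to $j_0-5=(C-A-1)l_m$ is only of order $\tfrac{A(C-A-1)}{C+1}(m-2)$, which for general $A\le C$ need not reach $(A-1)(m-2)$; and the ``bridging'' value $a_{j_0}P_m(-1)\ge (A+1)(m-3)$ already exceeds $A(m-2)$, so subtracting it from any $N\le A(m-2)$ gives a negative number. The paper does not bridge at all in (2): it simply observes that the hypothesis forces $b_i\ge A+1$ for every $i\ge (C-A-1)l_m$ (the $a_i$ are nondecreasing), so summing the $b_i$ all the way to $C\cdot l_m$ yields
\[
\sum_{i=1}^{C l_m} b_i \;\ge\; 5C + A\bigl((C-A-1)l_m-1-5C\bigr) + (A+1)\cdot (A+1)l_m \;>\; A(m-2),
\]
whence the $b$-part with $x_i\in\{0,1\}$ already covers $[0,A(m-2)]$ outright. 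The role of the hypothesis is thus purely a lower bound on the \emph{tail} of the $b$-sum of length $(A+1)l_m$, not a bridging denomination; you should go to index $C\cdot l_m$, not stop at $j_0$.
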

\begin{proof}
(1) Similarly with Proposition \ref{b}, we rearrange the coefficients of $F_m(\mathbf x)$ except 6 coefficients $a_{i(A)+1},a_t,\cdots,a_{t+4}$ as follows $$c_i:=\begin{cases}a_i & \text{ when } i \le i(A)\\ a_{i+1} & \text{ when } i(A)+1 \le i < t-1 \\ a_{i+6} & \text{ when }  t-1 \le i.
\end{cases}$$
Through similar arguments with the proof of Proposition \ref{b}, one may induce that if $c_i> c_1+\cdots +c_{i-1}+1$ for some $ i \le  C\cdot l_m$, then $n < m-4$.
From now on, we prove the lemma under the assumption \begin{equation}\label{cas}\text{$c_i\le c_1+\cdots +c_{i-1}+1$ for any $i \le C\cdot l_m$}.\end{equation} 
To obtain a contradiction assume that $m-4\le n$.
Through similar arguments with the proof of Lemma \ref{13}, one may obtain that $\sum_{i=1}^{C\cdot l_m}c_iP_m(x_i)$ represents every positive integer up to $(A-1)(m-2)$ by taking $x_i \in \{0,1\}$ for all $i$.
For an integer $N \in [(A-1)(m-2),A(m-2)]$, we may see that \begin{equation} \label{eq2}0<N-a_{i(A)+1}P_m(x_{i(A)+1})<(A-1)(m-2)\end{equation} holds for some $x_{i(A)+1} \in \{-1,2\}$.
From the above argument, since the $N-a_{i(A)+1}P_m(x_{i(A)+1})$ of (\ref{eq2}) is represented by $\sum _{i=1}^{C\cdot l_m}c_iP_m(x_i)$, we may yield that
 $N$ is represented by $\sum _{i=1}^{C\cdot l_m}c_iP_m(x_i)+a_{i(A)+1}P_m(x_{i(A)+1})$ by taking $x_{i(A)+1} \in \{-1,2\}$.
So we may obtain that $$\sum \limits _{i=1}^{C\cdot l_m}c_iP_m(x_i)+a_{i(A)+1}P_m(x_{i(A)+1})$$ represents every positive integer up to $A(m-2)$.

On the other hand, by Lemma \ref{A(m-2)}, all the multiples of $A(m-2)$ may be represented by $\sum_{i=t}^{t+4}a_iP_m(x_i)$. 
Finally, we may conclude that 
$$\sum \limits_{i=1}^{C\cdot l_m +6}a_iP_m(x_i)=a_{i(A)+1}P_m(x_{i(A)+1})+\sum \limits _{i=1}^{C\cdot l_m}c_iP_m(x_i)+\sum \limits_{i=t}^{t+4}a_iP_m(x_i)$$ $\left(C\cdot l_m +6 \approx \frac{C}{C+1}m\right)$ is already universal, yielding a contradiction to the fact that $F_m(\mathbf x)$ is a leaf.
This completes the proof of (1).

(2) In virtue of Proposition \ref{b}, under same notation as in the Proposition \ref{b}, we could assume that 
\begin{equation}\label{bas'}b_i \le b_1+\cdots +b_{i-1}+1\end{equation} for all $1\le i \le \min \{n-5, C\cdot l_m\}$.
Under the assumption (\ref{bas'}), we prove that $n \le C\cdot l_m+5<m-4$.
To obtain a contradiction, assume that $n >C\cdot l_m+5$.
With (\ref{bas'}), from 
\begin{align*}
\sum_{i=1}^{C\cdot l_m}b_i & =\sum_{i=1}^{5C}b_i+\sum_{i=5C+1}^{(C-A-1)\cdot l_m-1}b_i+\sum_{i=(C-A-1)l_m}^{C\cdot l_m}b_i\\
&\ge 5C+A((C-A-1)l_m-1-5C)+(A+1)(A+1)l_m\\
&=5C+A(C\cdot l_m-1)+(A+1)l_m\\
&>5C+A\left(C\cdot\frac{m-2}{C+1}-2\right)+(A+1)\left(\frac{m-2}{C+1}-1\right)\\
&=5C+A(m-2)-A\left(\frac{m-2}{C+1}+2\right)+(A+1)\left(\frac{m-2}{C+1}-1\right) \\
&>A(m-2),
\end{align*}
we may induce that $\sum_{i=1}^{C\cdot l_m}b_iP_m(x_i)$ represents every positive integer up to $A(m-2)$.
So we may conclude that $$\sum_{i=1}^{C\cdot l_m+5}a_iP_m(x_i)=\sum_{i=1}^{C\cdot l_m}b_iP_m(x_i)+\sum_{i=t}^{t+4}a_iP_m(x_i)$$
is already universal, yielding a contradiction since $\sum_{i=t}^{t+4}a_iP_m(x_i)$ represents all the multiples of $A(m-2)$ by Lemma \ref{A(m-2)}.
This completes the proof.
\end{proof}

\begin{table}
\caption{} \label{t2}
\begin{tabular}{|c|c|}
\hline  \rule[-2.4mm]{0mm}{7mm}
A             &   $(a_1,\cdots,a_{i(A)})$ \\
\hline \hline
$1$ &  \\
\hline
$2$ & $(1)$ \\
\hline
$3$ & $(1,1),(1,2)$ \\
\hline
$4$ & $(1,1,1),(1,1,2),(1,1,3),(1,2)$ \\
\hline
$5$ & $(1,1,1,1), (1,1,1,2), (1,1,1,3), (1,1,1,4), (1,2,2), (1,2,3), (1,2,4)$ \\
\hline
 & $(1,1,1,1,1), (1,1,1,1,2), (1,1,1,1,3), (1,1,1,1,4),(1,1,1,1,5),$ \\
$6$ & $(1,1,1,2), (1,1,1,3), (1,1,1,4),(1,1,2,2), (1,1,2,3), (1,1,2,4), (1,1,2,5),$\\
& $ (1,2,2), (1,2,3), (1,2,4)$ \\
\hline
$7$ & $(1,2,2,2), (1,2,2,3), (1,2,2,4), (1,2,2,5), (1,2,2,6),(1,2,3)$ \\
 \hline
$8$ & $(1,2,2,2),(1,2,2,3), (1,2,2,4), (1,2,2,5), (1,2,2,6), (1,2,3,3), (1,2,3,4), $\\
 & $ (1,2,3,5), (1,2,3,6), (1,2,3,7)$\\
\hline
 & $(1,2,2,2,2),(1,2,2,2,3),(1,2,2,2,4),(1,2,2,2,5),(1,2,2,2,6),(1,2,2,2,7), $\\
$9$&$(1,2,2,2,8),(1,2,2,3), (1,2,2,4), (1,2,2,5), (1,2,2,6), (1,2,3,3), (1,2,3,4), $\\
 & $ (1,2,3,5), (1,2,3,6), (1,2,3,7)$\\
\hline
& $(1,2,2,2,2),(1,2,2,2,3),(1,2,2,2,4),(1,2,2,2,5),(1,2,2,2,6),(1,2,2,2,7), $\\
$10$&$(1,2,2,2,8),(1,2,2,3,3),(1,2,2,3,4), (1,2,2,3,5), (1,2,2,3,6), (1,2,2,3,7), $\\
&$(1,2,2,3,8),(1,2,2,3,9),(1,2,2,4), (1,2,2,5), (1,2,2,6), (1,2,3,3), (1,2,3,4), $\\
 & $ (1,2,3,5), (1,2,3,6), (1,2,3,7)$\\
\hline
& $(1,2,2,2,2,2),(1,2,2,2,2,3),(1,2,2,2,2,4),(1,2,2,2,2,5),(1,2,2,2,2,6),$\\
&$(1,2,2,2,2,7),(1,2,2,2,2,8),(1,2,2,2,2,9),(1,2,2,2,2,10),$\\
&$(1,2,2,2,3),(1,2,2,2,4),(1,2,2,2,5),(1,2,2,2,6),(1,2,2,2,7), $\\
$11$&$(1,2,2,2,8),(1,2,2,3,3),(1,2,2,3,4), (1,2,2,3,5), (1,2,2,3,6), (1,2,2,3,7), $\\
&$(1,2,2,3,8),(1,2,2,3,9),(1,2,2,4,4),(1,2,2,4,5),(1,2,2,4,6),(1,2,2,4,7),$\\
&$(1,2,2,4,8),(1,2,2,4,9),(1,2,2,4,10),(1,2,2,5), (1,2,2,6),$\\
&$  (1,2,3,3,3),(1,2,3,3,4),(1,2,3,3,5),(1,2,3,3,6),(1,2,3,3,7),$\\
&$(1,2,3,3,8),(1,2,3,3,9),(1,2,3,3,10), (1,2,3,4),(1,2,3,5), (1,2,3,6), $\\
 & $ (1,2,3,7)$\\
\hline
& $(1,2,2,2,2,2),(1,2,2,2,2,3),(1,2,2,2,2,4),(1,2,2,2,2,5),(1,2,2,2,2,6),$\\
&$(1,2,2,2,2,7),(1,2,2,2,2,8),(1,2,2,2,2,9),(1,2,2,2,2,10)$\\
&$(1,2,2,2,3,3),(1,2,2,2,3,4),(1,2,2,2,3,5),(1,2,2,2,3,6),(1,2,2,2,3,7),$\\
&$(1,2,2,2,3,8),(1,2,2,2,3,9),(1,2,2,2,3,10),(1,2,2,2,3,11),(1,2,2,2,4),$\\
&$(1,2,2,2,5),(1,2,2,2,6),(1,2,2,2,7), (1,2,2,2,8),(1,2,2,3,3),(1,2,2,3,4),$\\
$12$&$(1,2,2,3,5), (1,2,2,3,6), (1,2,2,3,7),(1,2,2,3,8),(1,2,2,3,9),(1,2,2,4,4), $\\
&$(1,2,2,4,5),(1,2,2,4,6),(1,2,2,4,7),(1,2,2,4,8),(1,2,2,4,9),(1,2,2,4,10),$\\
&$(1,2,2,5,5),(1,2,2,5,6),(1,2,2,5,7),(1,2,2,5,8),(1,2,2,5,9),(1,2,2,5,10), $\\
&$ (1,2,2,5,11),(1,2,2,6), (1,2,3,3,3),(1,2,3,3,4),(1,2,3,3,5),(1,2,3,3,6),$\\
&$(1,2,3,3,7),(1,2,3,3,8),(1,2,3,3,9),(1,2,3,3,10), (1,2,3,4,4), $\\
&$(1,2,3,4,5),(1,2,3,4,6),(1,2,3,4,7),(1,2,3,4,8),(1,2,3,4,9),$\\
 & $(1,2,3,4,10),(1,2,3,4,11), (1,2,3,5), (1,2,3,6),(1,2,3,7)$\\
\hline
\end{tabular}
\end{table}

\vskip 0.5em

\begin{rmk}
In Section 4, until now, we showed that a great part of leaf has $\rank$ smaller than $m-4$, 
more precisely, every leaf other than the leaves $F_m(\mathbf x)=\sum \limits_{i=1}^na_iP_m(x_i)$
 with 
\begin{equation}\label{fin}\begin{cases}(A;a_1,\cdots,a_{i(A)}) \text{ in Table \ref{t2} and}  \\ a_{i}=A \ \text{ for all }i(A)<i \le \min \{n,(C-A-1)l_m\} \end{cases}\end{equation}
has the $\rank$ less than $m-4$. 
We lastly consider the $\rank$ of leaves $F_m(\mathbf x)$ with (\ref{fin}).
One may notice that the leaves $F_m(\mathbf x)$ with (\ref{fin}) are very slowly escalated proper universal $m$-gonal form.
\end{rmk}

\vskip 0.5em

\begin{lem} \label{AAAAA}
The $m$-gonal forms $$a_1P_m(x_1)+\cdots +a_{i(A)}P_m(x_{i(A)})+AP_m(x_{i(A)+1})+\cdots+AP_m(x_n)$$
where $n=\ceil{\left(1-\frac{1}{2A}\right)(C+1)l_m+\left(1-\frac{1}{2A}\right)(C+1)}+i(A)+5$ with $(A;a_1,\cdots,a_{i(A)})$ in Table \ref{t2} other than $ (1;\ ),(3;1,1),$ and $(3;1,2)$ are universal.
\end{lem}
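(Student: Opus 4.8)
The plan is to invoke Theorem \ref{C} and reduce to showing that each such form represents every integer in $[1,C(m-2)]$. Writing the form as
$$G_m(\mathbf x)=\sum_{i=1}^{i(A)}a_iP_m(x_i)+\sum_{j=1}^{k+5}AP_m(y_j),\qquad k:=\ceil{\left(1-\tfrac1{2A}\right)(C+1)(l_m+1)},$$
I would first set aside the last five $A$-terms: by Lemma \ref{A(m-2)} they represent every multiple of $A(m-2)$. Hence it suffices to prove that the remaining subform
$$H_m(\mathbf x)=\sum_{i=1}^{i(A)}a_iP_m(x_i)+\sum_{j=1}^{k}AP_m(y_j)$$
represents every integer in one full block $[0,A(m-2)]$; for then any $N\in[1,C(m-2)]$ is obtained by writing $N=qA(m-2)+r$ with $0\le r\le A(m-2)$ and representing $r$ by $H_m$ and $qA(m-2)$ by the reserved terms.

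To cover the block I would split it at $kA+s$, where $s:=a_1+\cdots+a_{i(A)}$. On the lower part $[0,kA+s]$ I take all variables in $\{0,1\}$: since $s\ge A-1$ the head $\sum a_iP_m(x_i)$ already realizes every residue class modulo $A$ (it represents $[0,s]$), while the $k$ terms $AP_m(y_j)$ with $y_j\in\{0,1\}$ contribute every multiple of $A$ up to $kA$, so together they fill $[0,kA+s]$. The inequality $(C+1)(l_m+1)\ge m-2$ gives $kA\ge(A-\tfrac12)(m-2)$, hence $kA+s>(A-1)(m-2)$ with a surplus of about $\tfrac12(m-2)$; this surplus is exactly what makes the remaining gap $(kA+s,A(m-2)]$ short enough to be bridged.

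For the upper part I would use the decomposition $P_m(x)=(m-2)P_3(x-1)+x$, so that $P_m(2)=m$ and $P_m(-1)=m-3$ each raise the high part by one while shifting the low part by $+2$ or $-1$. Assigning one coefficient-$1$ head term the value $2$ or $-1$ therefore moves any target $r\in(kA+s,A(m-2)]$ down to $r-(m-2)-\varepsilon$ with $\varepsilon\in\{2,-1\}$, which lies in $[0,(A-1)(m-2)]\subseteq[0,kA+s]$ and is represented by the surviving head together with the $k$ terms exactly as in the lower part. The subtle requirement is that, after spending one term on this height-one jump, the surviving head together with the choice $\varepsilon\in\{2,-1\}$ must still realize every residue class modulo $A$.

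I expect this congruence bookkeeping to be the main obstacle, and it is precisely what forces the three exclusions. When $A=2$ the two offsets $\{+2,-1\}$ already exhaust the classes modulo $2$, so $(2;1)$ survives even though no head term is left after the jump. When $A=3$, however, $+2\equiv-1\pmod 3$, so a single height-one jump reaches only one residue class; neither $(3;1,1)$ nor $(3;1,2)$ retains enough head to recover the class $0\bmod 3$ in the upper gap, and a height-two jump from the coefficient-$2$ term does not help because its offsets $\{+4,-2\}$ again coincide modulo $3$ (and it overshoots the block). For $A\ge4$ every tabulated head has $s\ge A$ and enough small coefficients that deleting one coefficient-$1$ term leaves all residues modulo $A$ attainable, so the argument runs uniformly. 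Thus the plan reduces to a finite verification, guided by Table \ref{t2}, that for every entry other than $(1;\ ),(3;1,1),(3;1,2)$ the residues modulo $A$ are covered after the jump; the remaining estimates ($kA+s$ large, remainder in range, overlap of the two parts) are uniform and routine.
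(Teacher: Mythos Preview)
Your overall architecture --- reserve five $A$-terms via Lemma \ref{A(m-2)}, then show that the remaining subform covers one full block $[0,A(m-2)]$, handling the lower part with $\{0,1\}$-values and bridging the upper gap by a jump --- is exactly the paper's strategy. The gap is in how you bridge the upper part. You restrict the jump to a single height-one move on one coefficient-$1$ head term and then require the surviving head (still with values in $\{0,1\}$) together with the offset $\varepsilon\in\{2,-1\}$ to realize every residue class modulo $A$. This verification fails for several Table \ref{t2} entries. For $(A;a_1,a_2,a_3)=(5;1,2,2)$, after removing the lone coefficient-$1$ term the surviving head $(2,2)$ attains only $\{0,2,4\}$, and $\{0,2,4\}+\{2,-1\}\equiv\{0,1,2,4\}\pmod 5$, so targets $r\equiv m+3\pmod 5$ in the upper gap are unreachable. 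The entries $(5;1,2,3)$ and $(6;1,2,3)$ fail in the same way, and your auxiliary claim $s\ge A$ for $A\ge 4$ is already false for $(4;1,1,1)$ and $(4;1,2)$.

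The paper avoids this by not constraining the head to a single height-one jump. It allows the \emph{entire} head $a_1P_m(x_1)+\cdots+a_{i(A)}P_m(x_{i(A)})$ to take arbitrary $m$-gonal values, and verifies for each tabulated $(A;a_1,\ldots,a_{i(A)})$ other than the three exceptions that the head represents a complete system of residues modulo $A$ somewhere in $[m-3,\lfloor(A-\tfrac12)(m-2)\rfloor]$. For $(5;1,2,2)$, for example, taking $x_2\in\{-1,2\}$ and $x_1,x_3\in\{0,1\}$ gives height-two head values $2(m-2)+\beta$ with $\beta\in\{-2,-1,0,1,4,5,6,7\}$, and these $\beta$'s already exhaust $\mathbb Z/5\mathbb Z$. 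So the fix is to broaden your finite check to allow jumps from any head term and of any height landing in the required interval; once you do that, your plan and the paper's proof coincide.
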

\begin{proof}
Similarly with Proposition \ref{b}, we rearrange the coefficients except the $5$ coefficients $a_t,\cdots, a_{t+4}$ as
$$b_i:=\begin{cases}a_i & \text{ when } i\le i(A)\\ a_{i+5} & \text{ when } i\ge i(A)+1.
\end{cases}$$
From 
\begin{align*}
\sum_{i=1}^{n}b_i&=\sum_{i=1}^{i(A)}b_i+\sum_{i=i(A)+1}^{n-5}b_i\\
&  =\sum_{i=1}^{i(A)}b_i+\sum_{i=i(A)+1}^{n-5}A\\
& \ge \sum_{i=1}^{i(A)}b_i+\left(A-\frac{1}{2}\right)(m-2),
\end{align*}
we may see that the $m$-gonal form $\sum_{i=1}^{n-5}b_iP_m(x_i)$ represents every positive integer up to $\floor{\left(A-\frac{1}{2}\right)(m-2)}$.
On the other hand, one may directly check that for each $(A;a_1,\cdots,a_{i(A)})=(A;b_1,\cdots,b_{i(A)})$ in Table \ref{t2} other than $ (1;\ ),(3;1,1),$ and $(3;1,2)$, $$a_1P_m(x_1)+\cdots+a_{i(A)}P_m(x_{i(A)})=b_1P_m(x_1)+\cdots+b_{i(A)}P_m(x_{i(A)})$$ represents complete residues modulo $A$ in $\left[m-3,\floor{\left(A-\frac{1}{2}\right)(m-2)}\right]$.
For example, for $(A;a_1,\cdots, a_{i(A)})=(2;1)$, we have a complete system of residues
$$P_m(-1)=m-3, \quad P_m(2)=m$$ modulo $A=2$
and for $(A;a_1,\cdots, a_{i(A)})=(4;1,1,1)$, we have a complete system of residues
\begin{align*} P_m(-1)+P_m(0)+P_m(0)=m-3, \quad & P_m(-1)+P_m(1)+P_m(0)=m-2,\\
P_m(-1)+P_m(1)+P_m(1)=m-1, \quad  & P_m(2)+P_m(0)+P_m(0)=m\\\end{align*}
modulo $A=4$.
Without difficulty, one may check that for the other cases too by hand.
We omit the lengthy calcuations in this paper.
And then an integer $N \in \left[\floor{\left(A-\frac{1}{2}\right)(m-2)},A(m-2)\right]$ may be written as 
$$N=b_1P_m(N_1)+\cdots +b_{i(A)}P_m(N_{i(A)})+AP_m(N_{i(A)+1})+\cdots +AP_m(N_{n-5})$$
where $N':=b_1P_m(N_1)+\cdots +b_{i(A)}P_m(N_{i(A)})$ is an integer in $[m-3,(A-1)(m-2)]$ which is equivalent with $N$ modulo $A$ and $(N_{i(A)+1},\cdots , N_{n-5}) \in \{0,1\}^{n-i(A)-5}$ since $$0\le N-N'\le (A-1)(m-2)+1$$ is a multiple of $A$.
So we may get that $\sum_{i=1}^{n-5}b_iP_m(x_i)$ represents every positive integer up to $A(m-2)$.

By Lemma \ref{A(m-2)}, since $$AP_m(x_{i(A)+1})+\cdots +AP_m(x_{i(A)+5})$$ represents all the multiples of $A(m-2)$, we may conclude that $$\sum_{i=1}^na_iP_m(x_i)=AP_m(x_{i(A)+1})+\cdots +AP_m(x_{i(A)+5})+\sum_{i=1}^{n-5}b_iP_m(x_i)$$ is universal.
\end{proof}

\begin{lem}
Let $F_m(\mathbf x)=\sum \limits_{i=1}^na_iP_m(x_i)$ be a leaf with $0 \neq a_{l_m} < C+1$ where $l_m:=\floor {\frac{m-2}{C+1}}$.
If there is $ t \le 5C-4$ for which $$a_t=a_{t+1}=\cdots =a_{t+4}=:A.$$
with $A\neq 1,3$, then we have $n <m-4$.
\end{lem}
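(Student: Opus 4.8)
The plan is to funnel an arbitrary such leaf into one of the structured families already analyzed, so that $n<m-4$ follows either from Lemma \ref{13} or from Lemma \ref{AAAAA}. Recall from Lemma \ref{13} that the leaf begins with $(a_1,a_2)=(1,1)$ or $(a_1,a_2,a_3)\in\{(1,2,2),(1,2,3),(1,2,4)\}$ and carries a block $a_t=\cdots=a_{t+4}=A$ with $C\le t\le 5C-4$; this is the block named in the hypothesis, and since $t\le 5C-4<l_m$ and the $a_i$ are nondecreasing we have $A=a_t\le a_{l_m}\le C$. I would split on the size of $A$.

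If $A\ge 13$, the claim is immediate from Lemma \ref{13}: for the start $(1,1)$ we have $A>6$, so part (1) applies; for $(1,2,2)$ and $(1,2,3)$ we have $A>12$, so parts (2),(3) apply; and for $(1,2,4)$ part (4) gives $n<m-4$ unconditionally. It therefore remains to treat $2\le A\le 12$ with $A\neq 3$, the value $A=1$ being excluded by hypothesis. Here I would first discharge the reductions: if the normalized sequence $b_i$ of Proposition \ref{b} violates $b_i\le b_1+\cdots+b_{i-1}+1$ for some $i\le C\cdot l_m$, then Proposition \ref{b}(2) already yields $n<m-4$; otherwise, with $i(A)$ as in Lemma \ref{k(A)+1}, the subcases $a_{i(A)+1}\le A-1$ and $A+1\le a_{(C-A-1)l_m+5}$ are settled by Lemma \ref{k(A)+1}(1) and (2).

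In the one surviving situation the coefficients become rigid. Monotonicity together with $i(A)\le A-1<C\le t$ (so $i(A)+1\le t$) and the failure of Lemma \ref{k(A)+1}(1) forces $a_{i(A)+1}=A$, while the failure of Lemma \ref{k(A)+1}(2) gives $a_{(C-A-1)l_m+5}\le A$; hence $a_j=A$ for all $i(A)<j\le (C-A-1)l_m+5$. At the same time, $a_1=1$, slow escalation, and the minimality of $i(A)$ confine the prefix $(a_1,\dots,a_{i(A)})$ to one of the finitely many tuples recorded against $A$ in Table \ref{t2}. Because $A\neq 1,3$, the pair $(A;a_1,\dots,a_{i(A)})$ is none of the excluded entries $(1;\ ),(3;1,1),(3;1,2)$, so Lemma \ref{AAAAA} applies to the form $G$ given by this prefix followed by $A$'s up to rank $n^\ast:=\ceil{\left(1-\tfrac{1}{2A}\right)(C+1)l_m+\left(1-\tfrac{1}{2A}\right)(C+1)}+i(A)+5$, which is universal. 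Provided $n^\ast\le (C-A-1)l_m+5$, the leaf agrees with $G$ through its first $n^\ast$ coefficients, so the rank-$n^\ast$ ancestor of the leaf is universal and, the leaf being the first universal node in its chain, $n\le n^\ast$. Finally, $(C+1)l_m\le m-2$ and $i(A)\le A-1\le C$ give $n^\ast\le (m-2)-\tfrac{m-2}{2A}+C+i(A)+7$, and $m>6C^2(C+1)$ makes $\tfrac{m-2}{2A}>C+i(A)+9$, whence $n^\ast<m-4$ and $n<m-4$.

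The hard part will be the two verifications folded into the small-$A$ case. The first is purely combinatorial: confirming that $a_1=1$, slow escalation, and the minimality of $i(A)$ exhaust exactly into Table \ref{t2}, with no prefix escaping the list. The second, and more delicate, is the inequality $n^\ast\le (C-A-1)l_m+5$ needed to identify the leaf's truncation with $G$; to leading order in $l_m$ it reduces to $2A^2+4A\le C+1$, which for $A$ as large as $12$ demands $C\ge 335$. This is exactly where the cutoff $A=12$ of Table \ref{t2} must mesh with the range $A\ge 13$ covered by Lemma \ref{13}, so the argument ultimately rests on the absolute constant $C$ of Theorem \ref{C} being large enough to absorb every $A\le 12$.
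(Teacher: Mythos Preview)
Your approach is essentially the paper's: the one-line proof in the paper simply invokes Proposition~\ref{b}, Lemma~\ref{k(A)+1}, and Lemma~\ref{AAAAA}, and you have spelled out how these fit together, including the split at $A=13$ handled by Lemma~\ref{13} (which the paper's proof line omits but the surrounding remarks make clear was intended). The reduction you give---discharge Proposition~\ref{b}(2) and Lemma~\ref{k(A)+1}(1),(2), then observe that the surviving leaf has the rigid prefix-plus-$A$'s shape indexed by Table~\ref{t2}, and invoke Lemma~\ref{AAAAA}---is exactly the route the paper intends.

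Your closing paragraph correctly isolates a quantitative point the paper passes over in silence. To identify the leaf's rank-$n^\ast$ ancestor with the form of Lemma~\ref{AAAAA} one needs $n^\ast\le (C-A-1)l_m+5$, and as you compute this forces, to leading order, $2A(A+2)\le C+1$; for $A=12$ this asks $C$ to be on the order of several hundred. The paper never checks this and simply leans on the absolute constant $C$ of Theorem~\ref{C} being large enough for the cutoff $A\le 12$ of Table~\ref{t2} to mesh with the range $A\ge 13$ of Lemma~\ref{13}. So your write-up matches the paper's argument and in addition names the implicit hypothesis on $C$ on which both rest.
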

\begin{proof}
From Lemma \ref{b}, Lemma \ref{k(A)+1}, and Lemma \ref{AAAAA}, one may yield this.
\end{proof}

\vskip 0.5em

\begin{rmk}
More delicate care may reduce the upper bound for the $\rank$ of $F_m(\mathbf x):=a_1P_m(x_1)+\cdots +a_{i(A)}P_m(x_{i(A)})+AP_m(x_{i(A)+1})+\cdots+AP_m(x_n)$ $$n\approx \left(1-\frac{1}{2A}\right)(m-2)$$ which makes $$a_1P_m(x_1)+\cdots +a_{i(A)}P_m(x_{i(A)})+AP_m(x_{i(A)+1})+\cdots+AP_m(x_n)$$ universal in the above lemma.
Actually, following \cite{KP}, especially for $(A;a_1,\cdots, a_{i(A)})$ of the form $(A;1,\cdots, 1)$ with $A \neq 1,3$,  we may take the below $n$
\begin{equation}\label{n}
n=\begin{cases}
\floor{\frac{m}{2}} & \text{ when }A=2 \\
\ceil{\frac{m-2}{4}}+2 & \text{ when }A=4 \\
\ceil{\frac{m-3}{A}}+(A-2) & \text{ when } 5\le A \le 12 \\
\end{cases}
\end{equation}
instead of $n=\ceil{\left(1-\frac{1}{2A}\right)(C+1)l_m+\left(1-\frac{1}{2A}\right)(C+1)}+i(A)+5\approx \left(1-\frac{1}{2A}\right)(m-2)$ in Lemma \ref{AAAAA}
and the $n$ of (\ref{n}) would be optimal.
The authors guess that for the most $(A;a_1,\cdots, a_{i(A)})$ in Table \ref{t2} other than $(3;1,1),$ and $(3;1,2)$ the optimal $n$ 
which makes $$a_1P_m(x_1)+\cdots +a_{i(A)}P_m(x_{i(A)})+AP_m(x_{i(A)+1})+\cdots+AP_m(x_n)$$ universal would be close to $\frac{m}{A}$ but not all.

Lastly, we consider the $\rank$ of leaves $F_m(\mathbf x)=\sum \limits_{i=1}^na_iP_m(x_i)$ with 
$$(a_1,\cdots,a_{(C-2)l_m+5})=(1,\cdots,1)$$ or $$(a_1,\cdots, a_{\min \{n,(C-4)l_m+5\}})=(1,1,3.\cdots,3) \text{ or }(1,2,3,\cdots, 3).$$
Actually among such the above leaves, the maximal $\rank R_m$ of leaves of $m$-gonal form's escalator tree would appear. 
\end{rmk}

\begin{lem} \label{A=1}
Let $F_m(\mathbf x)=\sum \limits_{i=1}^na_iP_m(x_i)$ be a node of the escalator tree with 
\begin{equation}\label{A=1,cond'}a_1=a_2=\cdots =a_{(C-2)l_m+5}=1.\end{equation}
If \begin{equation}\label{A=1,cond} m-4 \le a_1+a_2+\cdots+a_n,\end{equation} then the node would be universal, i.e., the node would become a leaf of the tree.
\end{lem}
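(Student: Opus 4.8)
The plan is to invoke Theorem \ref{C}, so that it suffices to show $F_m(\mathbf x)$ represents every integer in $[1,C(m-2)]$. Set $k:=(C-2)l_m+5$, so that $a_1=\cdots=a_k=1$, and note that Proposition \ref{node} already gives representation of $[0,S]$ with $S:=\sum_{i=1}^n a_i\ge m-4$, using only $x_i\in\{0,1\}$. I would record the three basic values $P_m(0)=0$, $P_m(1)=1$, $P_m(-1)=m-3$ and the congruence $P_m(x)\equiv x\pmod{m-2}$ coming from $P_m(x)=(m-2)P_3(x-1)+x$. For $N\in[1,C(m-2)]$ I write $N=q(m-2)+B$ with $q=\floor{N/(m-2)}\in[0,C]$ and $B\in[0,m-3]$, reserve five of the unit coefficients (which by Lemma \ref{A(m-2)} represent every nonnegative multiple of $(m-2)$), and reduce the problem to representing the residue $B$ by the remaining $n-5$ coefficients compatibly with some multiple.

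For the residue I would use two complementary schemes. \emph{From below}: the $k-5$ leftover ones already represent $[0,k-5]$ with $x_i\in\{0,1\}$, and extending by the tail (via Proposition \ref{node} applied to the reduced sequence) reaches $[0,S-5]\supseteq[0,m-9]$; the five reserved ones then add $q(m-2)$. \emph{From above}: for $B=(m-2)-s$ with $s$ small I set $s$ of the unit variables to $x=-1$, contributing $s(m-3)\equiv -s\equiv B\pmod{m-2}$, and absorb the surplus into the multiple, since $s(m-3)+(q+1-s)(m-2)=q(m-2)+B=N$; this is legitimate whenever $s\le q+1$ and $s\le k-5$, and for $m>6C^2(C+1)$ one checks $k-5=(C-2)l_m\ge C+1\ge s$. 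Hence the from-above scheme covers $B\in[m-3-q,m-3]$, and together with the from-below range $[0,m-9]$ these exhaust $[0,m-3]$ as soon as $q\ge 5$, because then $m-3-q\le m-8$. For the finitely many levels $q\in\{1,2,3,4\}$ (the level $q=0$ meaning $N\le m-3$, handled by Proposition \ref{node} and $P_m(-1)=m-3$) only the handful of top residues $B\in\{m-8,\dots,m-4-q\}$ remain.

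The step I expect to be the main obstacle is the from-below claim that the reduced form reaches $m-9$: deleting five ones can break the completeness condition $a_{i+1}\le a_1+\cdots+a_i+1$, but only at the first tail coefficient $a_{k+1}$. Since the escalator bound forces $a_{k+1}\le a_1+\cdots+a_k+1=k+1$, any gap so created is an interval contained in $(k-5,k+1)$, of width at most $5$, contributing only boundedly many exceptional residues $B\approx k$. Collecting the top-residue exceptions from the small levels $q\le4$ together with these completeness-gap exceptions leaves a bounded list of explicit integers $N$, all below a fixed multiple of $(m-2)$; I would dispatch them one by one by exhibiting representations with $m$-gonal numbers of bounded argument $|x|$, exactly as the small integers are treated at the end of the proof of Lemma \ref{rm1}. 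Thus the genuine difficulty is not the generic $N$ but this short, explicitly verifiable list, and controlling its size is the delicate part of the argument.
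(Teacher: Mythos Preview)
Your framework---reserve five unit coefficients for multiples of $m-2$ via Lemma~\ref{A(m-2)}, then represent the residue $B$ with the remaining $n-5$ coefficients---matches the paper's. The genuine gap is in how you treat the ``completeness-gap'' residues $B\in(k-5,a_{k+1})$.

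Neither of your two schemes reaches these $B$ at a generic level $q$. From-below misses them by construction. From-above requires $s=(m-2)-B\approx\frac{3}{C+1}(m-2)$ copies of $P_m(-1)$, but you also need $s\le q+1\le C+1$, which fails for large $m$. So each gap residue is exceptional at \emph{every} level $q\in[0,C]$, and you cannot ``dispatch them one by one by exhibiting representations with $m$-gonal numbers of bounded argument'' as in Lemma~\ref{rm1}: there the coefficients were the fixed powers $1,2,4,\dots$, whereas here the tail $a_{k+1},\dots,a_n$ is unknown and varies with the node, so no finite table of explicit identities can cover all cases.

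The paper's missing ingredient is a \emph{shift trick} that handles an entire residue class at once. If $E$ is a gap residue, then $E-1$ (just below the gap) is represented by $f_m$ with several $P_m(x_j)=1$; replacing one such $P_m(1)$ by $P_m(2)=m$ adds $m-1$, so $f_m$ itself represents $(E-1)+(m-1)=E+(m-2)$. Iterating, and symmetrically replacing $P_m(0)$ by $P_m(-1)=m-3$ starting from $E_s+1$, one shows $f_m$ represents $E_i+j(m-2)$ for enough small $j\ge1$ that, together with the reserved five ones, every integer $\equiv E_i\pmod{m-2}$ is covered except a bounded list near the bottom (the array~\eqref{1,exc}). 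Those finitely many survivors are then handled by the \emph{full} form $F_m$, again via the same substitution applied to the guaranteed representation of $E_i$ with $x_i\in\{0,1\}$. This is why the paper does not invoke Theorem~\ref{C} at all: universality is proved directly, not just up to $C(m-2)$.
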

\begin{proof}
Note that the $m$-gonal form 
$$f_m(\mathbf x):=a_6P_m(x_6)+\cdots +a_nP_m(x_n)$$ represents every positive integer up to $m-3$ except at most $5$ integers by taking $P_m(x_6) \in \{0,1,m-3\}$ and $P_m(x_i) \in \{0,1\}$ for all $7 \le i \le n$. 
And the integers not represented by $f_m(\mathbf x)$ would be consecutive.
On the specific, 
if there is an integer not represented by $f_m(\mathbf x)$, then the integer always would be in $[(C-2)l_m+1,m-4]$ and such the situation would happen only when 
\begin{equation}\label{A=1case}\begin{cases}a_1+\cdots+a_n-5<m-4  & \text{ or } \\ a_n>a_1+\cdots +a_{n-1}+1-5. & \end{cases}\end{equation}
When $a_1+\cdots+a_n-5<m-4$, the consecutive integers not represented by $f_m(\mathbf x)$ in $[(C-2)l_m+1,m-4]$ would be $$a_1+\cdots + a_n-4,a_1+\cdots + a_n-3,\cdots, m-4$$ and when $a_n>a_1+\cdots +a_{n-1}+1-5$, the consecutive integers not represented by $f_m(\mathbf x)$ in $[(C-2)l_m+1,m-4]$ would be $$a_1+\cdots+a_{n-1}-4,a_1+\cdots+a_{n-1}-3,\cdots,a_n-1.$$

In the cases that $f_m(\mathbf x)$ represents every positive integer up to $m-3$, we may conclude that $F_m(\mathbf x)$ is universal by using the fact that $$a_1P_m(x_1)+\cdots+a_5P_m(x_5)=P_m(x_1)+\cdots+P_m(x_5)$$ represents all the multiples of $m-2$ from Lemma \ref{A(m-2)}.

For the other cases, let $$E_1< E_2(=E_1+1) < \cdots < E_s(=E_1+(s-1))$$ where $s \le 5$ be all of the positive integers which are not represented by $f_m(\mathbf x)$ in $[(C-2)l_m+1,m-4]$.
By using Lemma \ref{A(m-2)} again, we may yield that $F_m(\mathbf x)$ represents every positive integer which is not congruent to $E_1,E_2,\cdots, E_s$ modulo $m-2$.
On the other hand, one may observe that 
$E_1-1$ is written as $f_m(\mathbf x)=a_6P_m(x_6)+\cdots +a_nP_m(x_n)$ with $x_i=1$ for all $6 \le i \le n-1$ so from the representation of $E_1-1$ by changing $P_m(x_6)=1$ to $P_m(2)=m$, we may obtain $$E_1+(m-2)=P_m(2)+P_m(0)+P_m(0)+\cdots+P_m(x_n),$$
 by changing both of $P_m(x_6)=P_m(x_7)=1$ to $P_m(2)=m$, we may obtain 
$$E_{2}+2(m-2)=P_m(2)+P_m(2)+P_m(0)+\cdots+P_m(x_n),$$ and  by changing all of $P_m(x_6)=P_m(x_7)=P_m(x_8)=1$ to $P_m(2)=m$, we may obtain 
$$E_{3}+3(m-2)=P_m(2)+P_m(2)+P_m(2)+\cdots+P_m(x_n),$$
i.e., $E_1+(m-2), E_2+2(m-2)$ and $E_3+3(m-2)$ may be represented by $f_m(\mathbf x)$.
And $E_s+1$ may be written as $f_m(\mathbf x)=a_6P_m(x_6)+\cdots +a_nP_m(x_n)$ with $x_i=0$ for all $7 \le i \le n-1$ so from the representation of $E_s+1$ by changing $P_m(x_7)=0$ to $P_m(-1)=m-3$, we may obtain $$E_s+(m-2)=P_m(x_6)+P_m(-1)+P_m(0)+P_m(0)+\cdots+P_m(x_n)$$
and by changing both of $P_m(x_7)=P_m(x_8)=0$ to $P_m(-1)=m-3$, we may obtain 
$$E_{s-1}+2(m-2)=P_m(x_6)+P_m(-1)+P_m(-1)+P_m(0)+\cdots+P_m(x_n),$$
i.e., $E_s+(m-2)$ and $E_{s-1}+2(m-2)$ may be represented by $f_m(\mathbf x)$.
So from Lemma \ref{A(m-2)}, we may conclude that every positive integer except the below at most $9$ positive integers 
\begin{equation}\label{1,exc}
\begin{array}{lllll}
E_1 \quad \quad  & \quad  \quad E_2          & \quad \quad \ \cdots & \quad \quad E_{s-1} & \quad \quad E_s \\
 & E_2+(m-2) & \quad \quad \ \cdots & E_{s-1}+(m-2) & \\
 &  & E_3+2(m-2) &  & \\
\end{array}
\end{equation} where $s \le 5$ is represented by $F_m(\mathbf x)$.

On the other hand, the representability of the above integers in (\ref{1,exc}) by $F_m(\mathbf x)$ may be directly confirmed.
Since $a_1+\cdots +a_n \ge m-4$, by Proposition \ref{node}, the integers $$E_1,E_2\cdots, E_s$$ (smaller than $m-4$) are represented by the node $F_m(\mathbf x)$ by taking $x_i \in \{0,1\}$ for all $i$ and moreover, in this situation, we may additionally assume that $P_m(x_1)=P_m(x_2)=1$.
And then from a representation of $E_1,\cdots ,E_{s-2}$ by $F_m(\mathbf x)$ with $P_m(x_1)=1$ by changing $P_m(x_1)=1$ to $P_m(2)=m$, we may see that $$E_2+(m-2),\cdots ,E_{s-1}+(m-2)$$ are represented by $F_m(\mathbf x)$ and from a representation of $E_1$ by $F_m(\mathbf x)$ with $P_m(x_1)=P_m(x_2)=1$ by changing both of $P_m(x_1)=P_m(x_1)=1$ to $P_m(2)=m$, we may see that $$E_3+2(m-2)$$ is represented by $F_m(\mathbf x)$.
This completes the proof.
\end{proof}

\vskip 0.5em

\begin{rmk}\label{A=1rmk}
Following the Guy's argument \cite{G}, the total sum of all coefficients of a leaf $\sum \limits_{i=1}^na_iP_m(x_i)$ must exceed $m-4$, i.e., $a_1+\cdots +a_n \ge m-4$ because otherwise, the integers in $[1,m-4]$ could not all be represented by the (universal) leaf.
So the coefficient condition (\ref{A=1,cond}) in Lemma \ref{A=1} on the total sum of all coefficients 
$$a_1+\cdots +a_n \ge m-4$$is essential for any leaf $\sum _{i=1}^n a_iP_m(x_i)$. 
By Lemma \ref{A=1}, we may easily induce that the $\rank$ of a leaf $\sum _{i=1}^na_iP_m(\mathbf x)$ with (\ref{A=1,cond'}) does not exceed $m-4$ since $a_1+\cdots +a_n \ge n$ and (\ref{A=1,cond}) holds for $n=m-4$ only if $a_1= \cdots = a_{m-5}=1$.
Since the truant of the node $$P_m(x_1)+\cdots +P_m(x_{m-5})$$ is $m-4$, it would have its childeren 
\begin{equation}\label{A=1,leaf}P_m(x_1)+\cdots +P_m(x_{m-5})+a_{m-4}P_m(x_{m-4})\end{equation} where $1\le a_{m-4}\le m-4$ and that's all.
Conesequently, we may yield that a leaf $$a_1P_m(x_1)+\cdots +a_nP_m(x_n)$$ with (\ref{A=1,cond'}) has $n \le m-4$ and there are excatly $m-4$ leaves with (\ref{A=1,cond'}) of the $\rank n= m-4$ of (\ref{A=1,leaf}).

Until now, we showed that the $\rank$ of a leaf $\sum \limits_{i=1}^na_iP_m(x_i)$ otherwise 
\begin{equation}\label{3cond} a_3=a_4=\cdots =a_{(C-4)l_m+5}=3 \end{equation} 
does not exceed $m-4$.
In Lemma \ref{A=3}, we treat the leaves $\sum \limits_{i=1}^na_iP_m(x_i)$ with (\ref{3cond}).
\end{rmk}

\vskip 0.5em
\begin{lem} \label{A=3}
Let $F_m(\mathbf x)=\sum \limits_{i=1}^na_iP_m(x_i)$ be a node of the escalator tree with
$a_3=a_4=\cdots =a_{(C-4)l_m+5}=3$.
\begin{itemize}
\item[(1) ] If  $3(m-3)-1 \le a_1+a_2+\cdots+a_n$ with $a_i \equiv 0 \pmod{3}  \text{ for all $3\le i \le n$}$, then the node would be universal, i.e., the node would become a leaf of the tree.
\item[(2) ] If $a_{n} \nequiv 0 \pmod{3}$, then the node would be universal, i.e., the node would become a leaf of the tree.
\end{itemize}
\end{lem}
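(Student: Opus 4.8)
The plan is to follow the template of Lemma \ref{A=1}, but with the modulus $m-2$ replaced by $3(m-2)$. By the escalator conditions (\ref{coe}) the hypothesis $a_3=3$ forces $a_1=1$ and $a_2\in\{1,2\}$, so $(a_1,a_2)\in\{(1,1),(1,2)\}$, and in either case a long block of coefficients equal to $3$ is present. First I would reserve five of these threes: by Lemma \ref{A(m-2)} with $A=3$, the subform $3P_m(x_{j_1})+\cdots+3P_m(x_{j_5})$ represents every multiple of $3(m-2)$. It therefore suffices to show that the reduced form $g_m(\mathbf x)$ obtained by deleting these five terms represents at least one integer in every residue class modulo $3(m-2)$, with representatives confined to a controlled range.

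For part (1), the key structural feature is that every $a_i$ with $i\ge 3$ is divisible by $3$, so each term $a_iP_m(x_i)$ with $i\ge 3$ is a multiple of $3$ and the residue of $g_m(\mathbf x)$ modulo $3$ is governed entirely by $a_1P_m(x_1)+a_2P_m(x_2)$. Using $P_m(0)=0$, $P_m(1)=1$, $P_m(-1)=m-3$, $P_m(2)=m$, a direct check shows that for both $(a_1,a_2)=(1,1)$ and $(1,2)$ one can realize each of the residues $0,1,2\pmod 3$ by a small value of $a_1P_m(x_1)+a_2P_m(x_2)$. Having fixed the residue modulo $3$ this way, the complementary value is a multiple of $3$ and is supplied by the block of threes (and the later coefficients) taken with $P_m(x_i)\in\{0,1\}$, together with a single term of the form $3P_m(2)=3m$ or $3P_m(-1)=3m-9$ to reach values near the top of a period. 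The total-sum hypothesis $3(m-3)-1\le a_1+\cdots+a_n$ plays exactly the role that $m-4\le a_1+\cdots+a_n$ plays in Lemma \ref{A=1}: after accounting for the fifteen units carried away by the reserved threes, it guarantees that $g_m(\mathbf x)$ reaches to within a bounded window of $3(m-2)$, so that only a bounded set of residue classes modulo $3(m-2)$ can fail to acquire a representative.

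For part (2), the extra hypothesis $a_n\nequiv 0\pmod 3$ supplies a term $a_nP_m(x_n)$ whose residue modulo $3$ is not forced to $0$; as $P_m(x_n)$ runs over small and large values ($0,1,m-3,m,\dots$) it hits every residue modulo $3$ while taking values $a_nP_m(-1)=a_n(m-3)$, $a_nP_m(2)=a_nm$ near the top of the range. This gives an independent, large-valued way of adjusting the residue modulo $3$, which is precisely what the total-sum bound secures in part (1); so I would use $a_nP_m(x_n)$ to populate the residue classes modulo $3(m-2)$ that the block of threes together with $a_1,a_2$ cannot reach on their own, and thereby dispense with the sum hypothesis.

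In both parts the remaining work is to collect the bounded family of integers in $[1,3(m-2)]$ that $g_m(\mathbf x)$ misses directly---those near the top of the range, or those whose residue-fixing base value is too large---and to recover each one. As in Lemma \ref{A=1} this is done by the shifting device: replacing $P_m(x)=0$ by $P_m(-1)=m-3$, or $P_m(x)=1$ by $P_m(2)=m$, shifts a representation by $m-2$, while the same replacements on a coefficient-$3$ term shift it by $3(m-2)\pm 3$; combined with the multiples of $3(m-2)$ furnished by the reserved block, these moves reach the exceptional values (or else one verifies them by an explicit representation, as was done for the integers in (\ref{TT})). I expect this exceptional-integer bookkeeping to be the main obstacle: passing from modulus $m-2$ to $3(m-2)$ triples the number of residue classes to be controlled, and the interaction between the coarse residue modulo $3(m-2)$ and the fine residue modulo $3$---controlled only through $a_1,a_2$ in part (1), or through $a_n$ in part (2)---is considerably more delicate than in the all-ones case of Lemma \ref{A=1}.
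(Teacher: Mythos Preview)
Your proposal is correct and matches the paper's approach: reserve five threes for Lemma \ref{A(m-2)}, show the reduced form covers $[1,3(m-2)]$ up to a bounded set of exceptions lying in a single residue class modulo $3$, and recover those by the shifting device from Lemma \ref{A=1}. The paper's treatment of part (2) opens by invoking part (1) to bound $a_n$ before proceeding, but otherwise runs exactly parallel to your sketch.
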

\begin{proof}
(1) For $$b_i:=\begin{cases}a_i & \text{ for }i=1,2\\ a_{i+5} & \text{ for } i \ge 3,\end{cases}$$ similarly with the proof of Lemma \ref{A=1}, we may get that the $m$-gonal form $f_m(\mathbf x):=\sum_{i=1}^{n-5}b_iP_m(x_i)$ may represent every positive integer up to $3(m-2)$ except at most 5 positive integers in $[3(C-2)l_m+3,3(m-3)-1]$ and these integers would have the same residue modulo $3$ and the congruent integers would be consecutive.
On the specific, the situation that there is an integer in $[3(C-2)l_m+3,3(m-3)-1]$ not represented by $f_m(\mathbf x)$ happens only when 
\begin{equation}\label{A=3case}\begin{cases}a_1+\cdots+a_n-15=b_1+\cdots+b_{n-5}<3(m-3)-1  & \text{ or } \\ a_n>a_1+\cdots +a_{n-1}+1-15. & \end{cases}\end{equation}

If $f_m(\mathbf x)$ represents every positive integer up to $3(m-2)$, then we may conclude that $F_m(\mathbf x)$ is universal from Lemma \ref{A(m-2)}.

Now assume that there is an integer in $[3(C-2)l_m+3,3(m-3)-1]$ not represented by $f_m(\mathbf x)$ and let  $$E_1 < E_2(=E_1+3) < \cdots < E_s(=E_1+3(s-1))$$ where $1\le s\le 5$ be all of the positive integers not represented by $f_m(\mathbf x)$ in $[1,3(m-3)-1]$.
Through similar arguments with the proof of Lemma \ref{A=1}, we may obtain that $F_m(\mathbf x)$ represents every positive integer except the below at most $9$ integers
\begin{equation}\label{3,exc}
\begin{array}{lllll}
E_1 \quad \quad  & \quad  \quad E_2          & \quad \quad \ \cdots & \quad \quad E_{s-1} & \quad \quad E_s \\
 & E_2+(m-2) & \quad \quad \ \cdots & E_{s-1}+(m-2) & \\
 &  & E_3+2(m-2) &  & \\
\end{array}
\end{equation} by using the fact that $a_3P_m(x_3)+\cdots +a_7P_m(x_7)$ represents all the multiples of $3(m-2)$ from Lemma \ref{A(m-2)}.

On the other hand, the representability of the above exception integers in (\ref{3,exc}) by $F_m(\mathbf x)$ may also be directly confirmed similarly with the argument of the proof of Lemma \ref{A=1}.

(2) In virtue of (1), we may have that $a_1+\cdots +a_{n-1}<3(m-3)-1$.
And then we may see that the truant of $\sum_{i=1}^{n-1}a_iP_m(x_i)$ would be one of 
$$a_1+\cdots +a_{n-1}+1, \  a_1+\cdots +a_{n-1}+2,  \text{ or }a_1+\cdots +a_{n-1}+3,$$
which implies that $$a_n\le a_1+\cdots +a_{n-1}+3<3(m-3)+2.$$
For $$b_i:=\begin{cases}a_i & \text{ for }i=1,2\\ a_{i+5} & \text{ for } i \ge 3,\end{cases}$$ similarly with the proof of Lemma \ref{A=1}, we may obtain that $f_m(\mathbf x):=\sum_{i=1}^{n-5}b_iP_m(x_i)$ represents every positive integer up to $3(m-2)$ except at most 5 positive integers in $[3(C-2)l_m+3,3(m-3)-1]$ and these integers would have the same residue modulo $3$ and the congruent integers would be consecutive. 
On the specific, the situation that there is an integer in $[3(C-2)l_m+3,3(m-3)-1]$ not represented by $f_m(\mathbf x)$ happen only when 
\begin{equation}\label{A=3case'}a_n>a_1+\cdots +a_{n-1}+1-15.\end{equation}

If $f_m(\mathbf x)$ represents every positive integer up to $3(m-2)$, then we may conclude that the $F_m(\mathbf x)$ is universal from Lemma \ref{A(m-2)}.

Now assume that there is an integer in $[3(C-2)l_m+3,3(m-3)-1]$ not represented by $f_m(\mathbf x)$ and let  
$$E_1 < E_2(=E_1+3) < \cdots < E_s(=E_1+3(s-1))$$ 
where $1\le s \le 5$ be all of the positive integers not represented by $f_m(\mathbf x)$ in $[3(C-2)l_m+3,3(m-3)-1]$.
Through similar arguments with the proof of Lemma \ref{A=1}, we may obtain that $F_m(\mathbf x)$ represents every positive integer except the below at most $9$ integers
\begin{equation}\label{3,exc'}
\begin{array}{lllll}
E_1 \quad \quad  & \quad  \quad E_2          & \quad \quad \ \cdots & \quad \quad E_{s-1} & \quad \quad E_s \\
 & E_2+(m-2) & \quad \quad \ \cdots & E_{s-1}+(m-2) & \\
 &  & E_3+2(m-2) &  & \\
\end{array}
\end{equation} in virtue of the fact that $a_3P_m(x_3)+\cdots +a_7P_m(x_7)$ represents all the multiples of $3(m-2)$ from Lemma \ref{A(m-2)}.

On the other hand, the representability of the above exception integers in (\ref{3,exc'}) by $F_m(\mathbf x)$ may also be directly confirmed similarly with the argument of the proof of Lemma \ref{A=1}.
\end{proof}

\begin{proof}[Proof of Theorem \ref{main}]
From Lemma \ref{A=3}, we may yield that
the $\rank$ of a leaf $\sum \limits_{i=1}^na_iP_m(x_i)$ with 
\begin{equation} \label{3cond'}a_3=a_4=\cdots =a_{(C-4)l_m+5}=3\end{equation}
would not exceed $m-2$ because $a_1+a_2+a_3+\cdots+a_n\ge 2+3(n-2)$ under (\ref{3cond'}).
Overall, with Remark \ref{A=1rmk}, we may get an upper bound $m-2$ for the $\rank$ of leaf of the $m$-gonal form's escalating tree.

On the other hand, we may have that the below nodes 
\begin{equation}\label{A=3,last}
\begin{cases}
P_m(x_1)+P_m(x_2)+\sum \limits_{i=3}^{m-3}3P_m(x_i) & \text{ when } m\equiv 0 \pmod{3}\\
P_m(x_1)+P_m(x_2)+\sum \limits_{i=3}^{m-3}3P_m(x_i) & \text{ when } m\equiv 1 \pmod{3}\\
P_m(x_1)+2P_m(x_2)+\sum \limits_{i=3}^{m-4}3P_m(x_i) & \text{ when } m\equiv 2 \pmod{3}
\end{cases}
\end{equation}
are not a leaf (i.e., not a universal form) with the truant $3m-10,3m-12$, and $3m-12,$ respectively and we may see that from Lemma \ref{A=3}, that's all of the nodes which are not leaves of the $\rank$ greater than or equal to $m-3, m-3,$ and $m-4$, respectively because 

\begin{equation}\label{A=3,last'}
\begin{cases}P_m(x_1)+2P_m(x_2)+\sum \limits _{i=3}^{\frac{2m-3}{3}}3P_m(x_i) & \text{ when } m\equiv 0 \pmod{3} \\
P_m(x_1)+2P_m(x_2)+\sum \limits _{i=3}^{\frac{2m-5}{3}}3P_m(x_i) & \text{ when } m\equiv 1 \pmod{3}  \\ 
P_m(x_1)+P_m(x_2)+\sum\limits _{i=3}^{\frac{2m-4}{3}}3P_m(x_i) & \text{ when } m\equiv 2 \pmod{3} 
\end{cases}
\end{equation}
 are universal.
This completes the proof of Theorem and also bring out Remark \ref{rmk R}. 
\end{proof}

\vskip 0.5em


\begin{rmk}
In this paper, we proved for $m>2\left(\left(2C+\frac{1}{4}\right)^{\frac{1}{4}}+\sqrt{2}\right)^2$,
$$r_m=\begin{cases}\ceil{\log_2(m-3)}+1& \text{ when } -3 \le 2^{\ceil{\log_2(m-3)}}-m\le 1\\
\ceil{\log_2(m-3)} & \text{ when } \ \quad 2 \le 2^{\ceil{\log_2(m-3)}}-m
\end{cases}
$$
and for $m>6C^2(C+1)$,
$$R_m=\begin{cases}m-2 & \text{ when } m\nequiv 2 \pmod{3}\\m-3 & \text{ when } m\equiv 2 \pmod{3}.\end{cases}$$
Especially, we showed that $$F_m^{(0)}(\mathbf x):=P_m(x_1)+2P_m(x_2)+\cdots +2^{r_m-1}P_m(x_{r_m})$$ is a leaf of the minimal $\rank r_m$,  i.e., an universal $m$-gonal form of the minimal $\rank r_m$. 

Now for $m>$ with $2 \le 2^{\ceil{\log_2(m-3)}}-m$, consider a sequence $\{F_m^{(j)}(\mathbf x)\}_{j=0}^{m-4-r_m}$ of leaves (proper universal $m$-gonal forms) which is inductively constructed from $F_m^{(0)}(\mathbf x)$ by splitting the first component $a_iP_m(x_i)$ with non-one coefficient $a_i(>1)$ into two components $P_m(\cdot)$ and $(a_i-1)P_m(\cdot)$, namely, 
from the $(j+1)$-th leaf of $\{F_m^{(j)}(\mathbf x)\}_{j=0}^{m-4-r_m}$ with $a_i > 1$
$$F_m^{(j)}(\mathbf x):=P_m(x_1)+\cdots +P_m(x_{i-1})+a_iP_m(x_i)+\cdots +a_{n}P_m(x_{n})$$
the next $(j+2)$-th leaf is constructed as
$$F_m^{(j+1)}(\mathbf x):=P_m(x_1)+\cdots +P_m(x_{i-1})+P_m(x_i)+(a_i-1)P_m(x_{i+1})+\cdots +a_{n}P_m(x_{n+1})$$
by rearranging variables.
By using the notation $\mathcal C_j:=[a_1,\cdots, a_n]$ for $F_m^{(j)}(\mathbf x)=\sum \limits_{i=1}^na_iP_m(x_i)$, we may describe the stream of coefficients of the sequence $\{F_m^{(j)}(\mathbf x)\}_{j=0}^{m-4-r_m}$ as follows

$$\begin{array}{lllllllll}
\mathcal C_0=&[1, & \ 2,  & \ \quad 4, & \ \quad 8, & 16, & \cdots, & 2^{r_m-2},& 2^{r_m-1}] \\
\mathcal C_1=&[1, &1,1,  & \ \quad 4, & \ \quad 8, & 16, & \cdots , & 2^{r_m-2},& 2^{r_m-1}]\\
\mathcal C_2=&[1, &1,1,  & \ \ \ 1,3, & \ \quad 8, & 16, & \cdots , & 2^{r_m-2},& 2^{r_m-1}]\\
\mathcal C_3=&[1, &1,1,  & \ 1,1,2, & \ \quad 8, & 16, & \cdots , & 2^{r_m-2},& 2^{r_m-1}]\\
\mathcal C_4=&[1, &1,1,  & 1,1,1,1, & \ \quad8, & 16, & \cdots , & 2^{r_m-2},& 2^{r_m-1}]\\
\mathcal C_5=&[1, &1,1,  & 1,1,1,1, & \ \ \ 1,7, & 16, & \cdots , & 2^{r_m-2},& 2^{r_m-1}]\\
\mathcal C_6=&[1, &1,1,  & 1,1,1,1, & \ 1,1,6, & 16, & \cdots , & 2^{r_m-2},& 2^{r_m-1}]\\
\mathcal C_7=&[1, &1,1,  & 1,1,1,1, & 1,1,1,5, & 16, & \cdots , & 2^{r_m-2},& 2^{r_m-1}]\\
\quad \vdots &   & &  & \  \ \quad \vdots  &  & & & \ \quad \quad  \quad  .\\
\end{array}$$
Then we may see $\rank F_m^{(j)}(\mathbf x)=r_m+j$ ultimately there appear leaves of $\rank$ from $r_m$ and to $m-4$ in the sequence $\{F_m^{(j)}(\mathbf x)\}_{j=0}^{m-4-r_m}$.

On the other hand for $m>$ with $-3 \le 2^{\ceil{\log_2(m-3)}}-m\le 1$, even though every $m$-gonal form of the inductively constructed sequence $\{ F_m^{(j)}(\mathbf x)\}_{j=0}^{m-4+r_m}$ as the same manner with the above is universal, but not all of them are proper universal, i.e., for some $F_m^{(j)}(\mathbf x)=\sum \limits_{i=1}^na_iP_m(x_i)$, its proper subform $$F_m^{(j)}(\mathbf x)-2^{r_m-1}P_m(x_n)$$ would also be universal and the subform would be indeed a leaf(i.e., proper univeral $m$-gonal form).
For the smallest $J$ for which $F_m^{(J)}(\mathbf x)$ is not a leaf, by replacing the $j$-th $m$-gonal form $F_m^{(j)}(\mathbf x)=\sum \limits_{i=1}^na_iP_m(x_i)$ for all where $j \ge J$ of the above sequence $\{F_m^{(j)}(\mathbf x)\}_{j=0}^{m-4-r_m}$ as its subform $$F_m^{(j)}(\mathbf x)-2^{r_m-1}P_m(x_n)$$
which is a leaf, we may get a sequence of leaf $\{F_m^{(j)}(\mathbf x)\}_{j=0}^{m-4-r_m}$.
And the $$\rank F_m^{(j)}(\mathbf x)=\begin{cases}r_m+j & \text{when } j<J \\ r_m+j-1 & \text{when } j \ge J. \end{cases}$$
So in this case, there appear leaves of $\rank$ from $r_m$ and to $m-5$ in the sequence $\{F_m^{(j)}(\mathbf x)\}_{j=0}^{m-4-r_m}$.
And $P_m(x_1)+\cdots +P_m(x_{m-4})$ is a leaf of the $\rank m-4$.

When $m \nequiv 2 \pmod{3}$, we may show that $$P_m(x_1)+P_m(x_2)+3P_m(x_3)+\cdots +3P_m(x_{r_m-2})+6P_m(x_{r_m-1})$$ is a leaf of $\rank r_m-1=m-3$ by using Lemma \ref{A=3}.

From the above arguments, we may see that there is a leaf of $\rank n$ for each $r_m \le n \le R_m$.
\end{rmk}


\end{document}